\documentclass[11pt]{amsart}



%
%
\def\frk{\frak}               

\def\Phi{{\frk n}}
\def\Phi{{\frk N}}
%

\def\MN{{\mathcal N}}

\def\MC{{\mathcal C}}

%

%
\def\opn#1#2{\def#1{\operatorname{#2}}} 
%
\opn\chara{char} \opn\length{\ell} \opn\pd{pd} \opn\rk{rk}
\opn\projdim{proj\,dim} \opn\injdim{inj\,dim} \opn\rank{rank}
\opn\depth{depth} \opn\grade{grade} \opn\height{height}
\opn\embdim{emb\,dim} \opn\codim{codim}

\opn\Tr{Tr} \opn\bigrank{big\,rank}
\opn\superheight{superheight}\opn\lcm{lcm}
\opn\trdeg{tr\,deg}
\opn\reg{reg} \opn\lreg{lreg} \opn\ini{in} \opn\lpd{lpd}
\opn\size{size}\opn\bigsize{bigsize}
\opn\cosize{cosize}\opn\bigcosize{bigcosize}
\opn\sdepth{sdepth}\opn\sreg{sreg}
\opn\link{link}\opn\fdepth{fdepth}\opn\rev{rev}
\opn\Simp{Simp}\opn\Chord{Chord}

%
\opn\div{div} \opn\Div{Div} \opn\cl{cl} \opn\Cl{Cl}
%

\let\epsilon\varepsilon
\let\phi=\varphi
\let\kappa=\varkappa
%
\opn\Spec{Spec} \opn\Supp{Supp} \opn\supp{supp} \opn\Sing{Sing}
\opn\Ass{Ass} \opn\Min{Min}\opn\Mon{Mon} \opn\dstab{dstab} \opn\astab{astab}
\opn\Syz{Syz}
%
%
\opn\Ann{Ann} \opn\Rad{Rad} \opn\Soc{Soc}
%
%
\opn\Im{Im} \opn\Ker{Ker} \opn\Coker{Coker} \opn\Am{Am}
\opn\Hom{Hom} \opn\Tor{Tor} \opn\Ext{Ext} \opn\End{End}
\opn\Aut{Aut} \opn\id{id}

\opn\nat{nat}
\opn\pff{pf}
\opn\Pf{Pf} \opn\GL{GL} \opn\SL{SL} \opn\mod{mod} \opn\ord{ord}
\opn\Gin{Gin} \opn\Hilb{Hilb}\opn\sort{sort}
\opn\initial{init}
\opn\ende{end}
\opn\height{height}
\opn\type{type}
%
%
\opn\aff{aff} \opn\con{conv} \opn\relint{relint} \opn\st{st}
\opn\lk{lk} \opn\cn{cn} \opn\core{core} \opn\vol{vol}
\opn\link{link} \opn\star{star}\opn\lex{lex}
\opn\gr{gr}

%
%

\def\pot#1#2{#1[\kern-0.28ex[#2]\kern-0.28ex]}

%
%
\opn\dirlim{\underrightarrow{\lim}}
\opn\inivlim{\underleftarrow{\lim}}
%
%
%
\let\union=\cup
\let\sect=\cap

\let\Union=\bigcup

%
%

\def\Implies{\ifmmode\Longrightarrow \else
        \unskip${}\Longrightarrow{}$\ignorespaces\fi}
\def\implies{\ifmmode\Rightarrow \else
        \unskip${}\Rightarrow{}$\ignorespaces\fi}
\def\iff{\ifmmode\Longleftrightarrow \else
        \unskip${}\Longleftrightarrow{}$\ignorespaces\fi}

\let\:=\colon
%
%

\usepackage{amssymb, amscd, latexsym}
\usepackage{amsmath}
\usepackage{amsthm}
\usepackage{graphicx}
\usepackage[all]{xy}
\usepackage{float}
\usepackage{mathrsfs}
\usepackage{pgf,tikz}
\usetikzlibrary{arrows}
\usepackage{array}
\usepackage{arydshln}
\usepackage{hyperref}

\hypersetup{
pdftitle={Research Paper},
pdfauthor={Ali Akbar Yazdan Pour},
pdfsubject={Betti Number and chordality},
pdfcreator={Ali Akbar Yazdan Pour},
colorlinks,
linkcolor=blue,
citecolor=magenta,
anchorcolor=red,
bookmarksopen,
urlcolor=red,
filecolor=red,
pdfpagetransition={Wipe}
}

\theoremstyle{plain}
\newtheorem{thm}{Theorem}[section]
\newtheorem{cor}[thm]{Corollary}
\newtheorem{lem}[thm]{Lemma}
\newtheorem{prop}[thm]{Proposition}

\theoremstyle{definition}
\newtheorem{defn}[thm]{Definition}
\newtheorem{ex}[thm]{Example}

\theoremstyle{remark}
\newtheorem{rem}{Remark}


\def\cocoa{{\hbox{\rm C\kern-.13em o\kern-.07em C\kern-.13em o\kern-.15em A}}}

\def\Ker{{\rm Ker}}
\def\reg{{\rm reg}}
\def\rk{{\rm rank}\,}
\def\projdim{{\rm projdim}}

\def\C{{\mathcal C}}

\def\implies{\ifmmode\Rightarrow \else
        \unskip${}\Rightarrow{}$\ignorespaces\fi}
\def\Tor{\mathrm{Tor}}

\textwidth=15cm
\textheight=22cm
\topmargin=0.5cm
\oddsidemargin=0.5cm
\evensidemargin=0.5cm


\begin{document}

\title{simplicial orders and chordality}
\author[M. Bigdeli, J. Herzog, A. A. {Yazdan Pour} and  R. Zaare-Nahandi]{Mina Bigdeli, J\"urgen Herzog, {Ali Akbar} {Yazdan Pour} and  Rashid Zaare-Nahandi}

\address{Mina Bigdeli, department  of mathematics,  institute for advanced studies in basic sciences (IASBS),
	45195-1159 Zanjan, Iran} \email{mina.bigdeli@yahoo.com}

\address{J\"urgen Herzog, fachbereich mathematik, universit\"at duisburg-essen, fakult\"at f\"ur mathematik, 45117
	Essen, Germany} \email{juergen.herzog@uni-essen.de}

\address{Ali Akbar Yazdan Pour, department  of mathematics,  institute for advanced studies in basic sciences (IASBS),
		45195-1159 Zanjan, Iran} \email{yazdan@iasbs.ac.ir}

\address{Rashid Zaare-nahandi, department  of mathematics,  institute for advanced studies in basic sciences (IASBS),
	45195-1159 Zanjan, Iran} \email{rashidzn@iasbs.ac.ir}


\subjclass[2010]{Primary 13D02, 13P20; Secondary 05E45, 05C65.}
\keywords{chordal clutter, simplicial order, Betti number, $\lambda$-sequence, Hilbert function}

\begin{abstract}
Chordal clutters in  the sense of \cite{MNYZ} and \cite{BYZ} are defined via simplicial orders. Their circuit ideal has a linear resolution, independent of the characteristic of the base field. We show that any Betti sequence of an ideal with linear resolution appears as the Betti sequence of the circuit ideal of such a chordal clutter. Associated with any simplicial order is a sequence of integers which we call the $\lambda$-sequence of the chordal clutter. All possible $\lambda$-sequences are characterized. They are intimately related to the Hilbert function of a suitable standard graded $K$-algebra attached to the chordal clutter. By the  $\lambda$-sequence of a chordal clutter  we determine  other numerical invariants of the circuit ideal, such as the $\textbf{h}$-vector  and the  Betti numbers.
\end{abstract}
\maketitle

\section*{Introduction}

One of the most challenging problems in combinatorial commutative algebra is to give in combinatorial terms a characterization of those monomial ideals which have a linear resolution independent of the characteristic of the base field. In trying to solve this problem one may,  by using polarization,  focus on squarefree monomial ideals. Squarefree monomial ideals generated in degree 2 may be interpreted as edge ideals of graphs. In this particular case, the famous theorem of Fr\"oberg \cite{Fr} gives a complete answer to the above problem. Fr\"oberg's theorem says that the edge ideal $I(G)$ of a graph $G$ has a linear resolution if and only if the complementary graph $\bar{G}$ of  $G$ is chordal. In particular, for the edge ideal of a graph it does not depend on the characteristic  of the base field whether or not it has a linear resolution. One would like to have a similar theorem for squarefree monomial ideals generated in degree $d$, where $d$ is any integer $\geq 2$.

The $d$-uniform clutters generalize simple graphs,  which are just the $2$-uniform clutters. A $d$-uniform clutter $\MC$ on the vertex set $[n]=\{1,2,\ldots,n\}$ is nothing but a collection of $d$-subsets of $[n]$. The complementary clutter  $\bar{\MC}$ of $\MC$ is the set of all $d$-element subsets of $[n]$ which do not belong to $\MC$.  The circuit ideal of $\MC$ is the ideal generated by  the monomials $\textbf{x}_F$ with $F\in \bar{\MC}$. The analogue of Fr\"oberg's would then say that the circuit ideal of $\MC$ has a linear resolution, independent of the characteristic of the base field, if and only if $\MC$ is chordal. But what does it mean that $\MC$ is chordal? There have been several other attempts to define chordal clutters, see \cite{Nevo,ConnonFaridi,Emtander,Woodroofe}.  Here we consider the  concept of chordality of clutters  as it was introduced in \cite{MNYZ}. It was shown in \cite{BYZ} that all previously defined chordal clutters are  chordal in this new sense, and that the circuit ideal of a chordal clutter has a linear resolution. However, the converse is still an open question. The definition of chordality  given in \cite{MNYZ} imitates Dirac's characterization of chordal graphs.  It is required that the clutter admits a  simplicial order. Roughly speaking, a {\em simplicial order} for a $d$-uniform clutter $\MC$ is a sequence  $\textbf{e}= e_1,\ldots,e_r$ of $(d-1)$-sets such that for each $i$,  the set  $e_i$ has a clique as its  closed neighborhood in $\MC_{e_1\dots e_{i-1}}$, where $\MC_{e_1\dots e_{i-1}}$ is obtained from $\MC$ by `deleting' $e_1,\ldots,e_{i-1}$. Furthermore, it is required that $\MC_{e_1,\ldots e_r}=\emptyset$. The precise definition is given in Section~\ref{pre}.

The cardinality of the open neighborhood  belonging to $e_i$, that is to say, the cardinality of the set  of all elements $c\in [n]$ for which $e\union\{c\}\in\MC$,  is denoted by $N_i$. Thus, with each simplicial order $\textbf{e}$  of a chordal clutter there is  associated the multiset $\{N_1,\ldots,N_r\}$. Simple examples show that a chordal clutter may have different simplicial orders. However, as shown in Corollary~\ref{remarkable},  all simplicial orders have the same multisets. Therefore we can talk of {\em the} multiset of a chordal clutter. The invariance of the multiset is a consequence of Proposition~\ref{fvector},   where it is shown that the $\textbf{f}$-vector of the clique complex $\Delta(\MC)$ of a chordal clutter $\MC$ is determined by the multiset of any of its simplicial orders. From this result it can be also derived that the multiset of $\MC$ determines the $\textbf{h}$-vector  of $\Delta(\MC)$ as well as the Betti sequence of the circuit ideal of $\MC$, see Corollary~\ref{h-vector} and Corollary~\ref{bettinumbers}. More important is the result, stated in Theorem~\ref{all}, which says that the Betti sequence of any ideal with linear resolution is the Betti sequence of the circuit ideal of a chordal clutter. So numerically, the circuit ideal of chordal clutters coincide with the ideals with linear resolution. This fact supports our expectation that the circuit ideal of  the chordal clutters are precisely the monomial ideals which admit a linear resolution, independent of the characteristic of the base field.

Since each chordal clutter determines a multiset $\MN=\{N_1,\ldots,N_r\}$, and since this multiset determines all the  algebraic and homological data of the clutter,  it is of interest to know which finite multisets of positive integers occur as the multiset of a chordal clutter. With the multiset $\MN$ we associate a sequence $\lambda(\MN)=\lambda_1(\MN),\lambda_2(\MN),\ldots$, where $\lambda_i(\MN)$ is the number of elements $N_j$  with $N_j=i$. It is obvious that any finite multiset of positive integers is in bijection to a sequence of integers $\lambda_1,\lambda_2,\ldots$, which is eventually constant zero. We denote by $\lambda(\MC)$ the $\lambda$-sequence of the multiset associated with $\MC$. In the last section of this paper the question is studied which integer sequences are the $\lambda$-sequences of  a chordal clutter. The main result is formulated in Theorem~\ref{possiblelambda}, where it is shown that the $\lambda$-sequence of a chordal $d$-clutter on the vertex set $[n]$ is intimately related to the Hilbert function of a suitable $K$-algebra $R$ of embedding dimension $\leq d$ with $\dim_KR_{i}=0$ for $i\geq n-d+1$. By using this characterization a strict upper bound of $\lambda_i(\MC)$ for each $1\leq i\leq n-d$ is given in Corollary~\ref{bound}. Finally in Proposition~\ref{complete} we give a precise formula for the $\lambda$-sequence of the complete $d$-clutter on $[n]$. Indeed it is shown that $\lambda_{i}(\MC_{n,d})={n-i-1\choose d-2}$.

In \cite{code}, a computer program has been provided in order to check chordality of a given $d$-uniform clutter. A modified version of this program, available at \cite{code2}, provides some numerical data about circuit ideal of chordal clutters. More precisely, this program first checks whether a given $d$-uniform clutter $\mathcal{C}$ is chordal. Then, it computes the simplicial multiset of $\mathcal{C}$, its $\lambda$-sequence, the $\textbf{h}$-vector of $\Delta = \Delta \left( \mathcal{C} \right)$ and the Betti sequence of the ideal $I_\Delta$.

\section{Preliminaries}
\label{pre}

In this section we fix notation and recall some concepts and results which will be used in this paper.

\subsection{Simplicial complexes}
A \textit{simplicial complex} $\Delta$  over a set of vertices $V=\{v_{1}, \ldots, v_{n} \}$, is a collection of subsets of $V$, with the property that:
\begin{itemize}
\item[(a)] $\{ v_{i} \} \in \Delta $ for all $i$;
\item[(b)] if $F\in \Delta$, then all subsets of $F$ are also in $\Delta$ (including the empty set).
\end{itemize}

The elements $F\in \Delta$ are called the {\em faces} of $\Delta$ and \textit{dimension} of each face $F \in \Delta$ is $\dim F =|F|-1$. Also \textit{dimension} of $\Delta$, $\dim \Delta$, is given by $\max\{\dim F\:\, F\in \Delta\}$.

\begin{defn}[\textbf{f}-vector]
Let $\Delta$ be a simplicial complex of dimension $\delta-1$ on the vertex set $V$ and  $f_i = f_i(\Delta) $ denote the number of faces of $\Delta$ of dimension $i$. Thus, in particular $f_{-1} =1, f_0 = n$. The sequence
\begin{equation*}
\textbf{f}(\Delta) = \left( f_{-1}, f_0, \ldots, f_{\delta-1} \right)
\end{equation*}
is called the $\textbf{f}$-\textit{vector} of $\Delta$. Also, the polynomial
\begin{equation*}
\textbf{f}_\Delta(t) = f_{-1} + f_0 t + \cdots + f_{\delta-1} t^{\delta} \in \mathbb{Z}[t]
\end{equation*}
is called the $\textbf{f}$-\textit{polynomial} of $\Delta$.
\end{defn}

The algebraic object attached to  a simplicial complex is the so-called Stanley--Reisner ring.

\begin{defn}
Let $\Delta$ be a simplicial complex on the vertex set $[n]=\{1,2,\ldots,n\}$, and  let $K$ be a field. The {\em Stanley--Reisner ideal} $I_\Delta$ of $\Delta$ is the squarefree monomial ideal in the polynomial ring $S=K[x_1,\ldots,x_n]$ generated by the monomials $\textbf{x}_F$ with $F\subset [n]$ and $F\notin \Delta$. Here, for $F=\{i_1,\ldots,i_r\}$, we have set $\textbf{x}_F=\prod_{j=1}^rx_{i_j}$.
The $K$-algebra $K[\Delta]=S/I_\Delta$  is called the {\em Stanley--Reisner ring} of $\Delta$.
\end{defn}

Note that $K[\Delta]$ is naturally standard graded. Thus, its Hilbert series $\Hilb(K[\Delta], t)$ is a rational function of the form $Q(t)/(1-t)^\delta$, where $\delta=\dim K[\Delta]$ and $Q(t)=h_0+h_1t+\cdots+ h_\delta t^\delta$ is a polynomial of degree at most $\delta$ with integer coefficients. The vector $\textbf{h} (\Delta) =(h_0,\ldots,h_\delta)$ is called the $\textbf{h}$-\textit{vector} of $\Delta$.

\begin{prop}[{\cite[Lemma 5.1.8]{BH}}] \label{Formula for h-vector}
Let $\Delta$ be a simplicial complex of dimension $\delta-1$ and $\text{{\rm \textbf{h}}}=(h_i)$ {\rm(}res. $\text{{\rm \textbf{f}}}=(f_i)${\rm )} be the $\text{{\rm \textbf{h}}}$-vector  {\rm(}res. $\text{{\rm \textbf{f}}}$-vector{\rm )} of $\Delta$. Then
\begin{equation*}
\sum\limits_{i=0}^{\delta} h_i t^i = \sum\limits_{i=0}^{\delta} f_{i-1} t^i (1-t)^{\delta-i} \quad \text{and}\quad \sum_{i=0}^{\delta} h_i t^i(1+t)^{\delta-i} = \sum\limits_{i=0}^{\delta} f_{i-1} t^i.
\end{equation*}
In particular, for $k > \delta$, $h_k =0$ and for $0 \leq k \leq \delta$,
\begin{itemize}
\item[\rm (i)] $h_k =\sum\limits_{i=0}^{k} (-1)^{k-i} { \delta-i \choose k-i }f_{i-1}$;
\item[\rm (ii)] $f_{k-1} = \sum\limits_{i=0}^{k} {{\delta-i} \choose {k-i}} h_i$.
\end{itemize}
\end{prop}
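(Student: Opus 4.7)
The plan is to derive both identities from the Hilbert series of the Stanley--Reisner ring $K[\Delta]$. Recall that $K[\Delta]$ admits a $K$-basis consisting of all monomials $\prod_{i\in F} x_i^{a_i}$ with $F\in\Delta$ and each $a_i\geq 1$. Consequently the Hilbert series decomposes face by face: a face $F$ with $|F|=j$ contributes exactly $t^j/(1-t)^j$ (and the empty face contributes $1$), and grouping by dimension gives
\[
\Hilb(K[\Delta], t) \;=\; \sum_{i=0}^{\delta} f_{i-1}\,\frac{t^i}{(1-t)^i}.
\]

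On the other hand, by the very definition of the $\textbf{h}$-vector, $\Hilb(K[\Delta], t) = \bigl(\sum_{i=0}^{\delta} h_i t^i\bigr)/(1-t)^\delta$. Clearing the common denominator $(1-t)^\delta$ in both formulas and comparing numerators immediately yields the first identity. The second identity is then obtained by a purely formal manipulation: substitute $t \mapsto s/(1+s)$ in the first identity and multiply through by $(1+s)^\delta$ to cancel the denominators, observing that the resulting expression is polynomial in $s$; renaming $s$ back to $t$ produces the stated form.

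For parts (i) and (ii) one simply extracts the coefficient of $t^k$ on each side. Using the binomial expansion $(1-t)^{\delta-i}=\sum_{j}\binom{\delta-i}{j}(-1)^j t^j$, the coefficient of $t^k$ on the right-hand side of the first identity is $\sum_{i=0}^{k}(-1)^{k-i}\binom{\delta-i}{k-i}f_{i-1}$, which gives (i); likewise $(1+t)^{\delta-i}=\sum_{j}\binom{\delta-i}{j}t^j$ applied to the second identity gives (ii). The vanishing $h_k=0$ for $k>\delta$ is then visible from the fact that the right-hand side of the first identity is a polynomial of degree at most $\delta$.

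There is no genuine conceptual obstacle in this argument; it is entirely formal once the two expressions for $\Hilb(K[\Delta], t)$ are in hand. The only point demanding minor care is the justification that $K[\Delta]$ possesses the claimed monomial $K$-basis — this follows directly from the description of $I_\Delta$ as the ideal generated by squarefree monomials $\textbf{x}_G$ with $G\notin\Delta$, since a monomial survives modulo $I_\Delta$ exactly when its support is a face of $\Delta$ — and the verification that the substitution $t\mapsto s/(1+s)$ is legitimate, which is secured by the polynomial nature of both sides after clearing $(1+s)^\delta$.
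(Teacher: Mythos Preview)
Your argument is correct and is essentially the standard proof of this fact (as found in Bruns--Herzog, Lemma~5.1.8). Note, however, that the paper does not supply its own proof of this proposition: it is stated with a citation to \cite{BH} and used as a black box, so there is nothing to compare against beyond observing that your derivation reproduces the classical one.
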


\subsection{Clutters}
We recall the concept of clutters and circuit ideals as well as the concept of simplicial orders which will lead to a definition of  chordality.

\begin{defn}[Clutter] \label{SC}
A \textit{clutter} $\C$ on the vertex set $[n]$ is a collection of subsets of $[n]$, called \textit{circuits} of $\C$, such that if $F_1$ and $F_2$ are distinct circuits, then $F_1 \nsubseteq F_2$.
A \textit{$d$-circuit} is a circuit consisting of exactly $d$ vertices, and a clutter is called \textit{$d$-uniform}, if every circuit has $d$ vertices.
A $(d-1)$-subset $e \subset [n]$ is called a \textit{submaximal circuit} of $\C$, if there exists $F \in \C$ such that $e \subset F$. The set of all submaximal circuits of $\C$ is denoted by ${\rm SC}(\C)$.
\end{defn}

For a non-empty clutter $\C$ with the vertex set $[n]$, we define the ideal $I \left( \C \right)$, as follows:
$$I(\C) = \left(  \textbf{x}_F \colon \quad F \in \C \right),$$
and we set $I(\emptyset) = 0$.

Let $n$, $d$ be positive integers. For $n \geq d$, we define $\C_{n,d}$, the \textit{complete $d$-uniform clutter} on $[n]$, as follows:
$$\C_{n,d} = \left\{ F \subset [n] \colon \quad |F|=d \right\}.$$
In the case that $n<d$, we let $\C_{n,d}$ be some isolated points. It is well-known that, for $n \geq d$ the ideal $I \left( \C_{n,d} \right)$ has a $d$-linear resolution (see e.g. \cite[Example 2.12]{MNYZ}).

If $\C$ is a $d$-uniform clutter on $[n]$, we define $\bar{\C}$, the \textit{complement} of $\C$, to be
\begin{equation*}
\bar{\C} = \C_{n,d} \setminus \C = \{F \subset [n] \colon \quad |F|=d, \,F \notin \C\}.
\end{equation*}

Frequently in this paper, we take a $d$-uniform clutter $\C \neq \C_{n,d}$ with the vertex set $[n]$ and consider the squarefree monomial ideal $I(\bar{\C})$ in the polynomial ring $S=K[x_1, \ldots, x_n]$. The ideal $I \left( \bar{\C} \right)$ is called the \textit{circuit ideal} of $\C$.

\begin{defn}
Let $\C$ be a $d$-uniform clutter on $[n]$. A subset $V \subset [n]$ is called a \textit{clique} in $\C$, if all $d$-subsets of $V$ belong to $\C$. Note that a subset of $[n]$ with less than $d$ elements is supposed to be a clique.
\end{defn}

The set of cliques of $\MC$ forms a simplicial complex, denoted $\Delta(\MC)$, which is called the {\em clique complex} of $\MC$. Its Stanley--Reisner ideal $I_{\Delta(\MC)}$ coincides with the circuit ideal $I(\bar{\MC})$ of $\MC$ \cite[Proposition 4.4]{mar2}.

\medskip
For any $(d-1)$-subset $e$ of $[n]$, the set
\begin{equation*}
\mathrm{N}_{\C} \left( e \right) = \lbrace c \in \left[ n \right] \colon \quad e \cup \lbrace c \rbrace \in \mathcal{C} \rbrace
\end{equation*}
is called the \textit{open neighborhood} of $e$ in $\mathcal{C}$, while the \textit{close neighborhood} of $e$ in $\mathcal{C}$ is defined to be $\mathrm{N}_\C\left[ e \right] = e \cup \mathrm{N}_{\C} \left( e \right)$. We say that $e \in \mathrm{SC}(\C)$ is \textit{simplicial} in $\C$, if $\mathrm{N}_{\C} \left[ e \right]$ is a clique in $\mathcal{C}$. Let us denote by $\mathrm{Simp} \left( \mathcal{C} \right)$, the set of all simplicial elements of $\mathcal{C}$.

\begin{defn}
Let $\mathcal{C}$ be a $d$-uniform clutter and let $e$ be a $(d-1)$-subset of $[n]$.  By $\mathcal{C} \setminus e$ we mean the $d$-uniform clutter
$$ \left\{ F \colon \quad F \in \mathcal{C}, \ e \not\subset F \right\}.$$
It is called the \textit{deletion} of $e$ from $\mathcal{C}$. In the case that $e$ is not a submaximal circuit of $\C$, we have $\mathcal{C} \setminus e = \C$.
\end{defn}

Now we come to the crucial definition of this paper.

\begin{defn}\rm
Let $\mathcal{C}$ be a $d$-uniform clutter. We call $\mathcal{C}$ a \textit{chordal clutter}, if either $\mathcal{C}=\emptyset$, or $\mathcal{C}$ admits a simplicial submaximal circuit $e$ such that $\mathcal{C}\setminus e$ is chordal.
\end{defn}
Following the notation in \cite{MNYZ}, we use $\mathfrak{C}_d$, to denote the class of all $d$-uniform chordal clutters.

\begin{defn}
A sequence of submaximal circuits of $\mathcal{C}$, say $e_1, \ldots, e_t$, is called a \textit{simplicial sequence} in $\mathcal{C}$ if $e_1$ is  simplicial  in $\mathcal{C}$ and $e_i$ is simplicial in $\left( \left( \left( \mathcal{C} \setminus e_{1} \right) \setminus e_2 \right)  \setminus \cdots \right) \setminus e_{i-1}$ for all $i>1$,
\end{defn}

The definition of chordal clutters can be restated as follows: the $d$-uniform clutter $\mathcal{C}$ is called chordal if either $\mathcal{C} = \emptyset$, or else there exists a simplicial sequence in $\mathcal{C}$, say $e_1, \ldots, e_r$, such that $\left( \left( \left(  \mathcal{C} \setminus e_{1} \right) \setminus e_2 \right) \setminus \cdots \right) \setminus e_{r} = \emptyset$.
To simplify the notation, we use $\mathcal{C}_{e_1 \dots e_i}$  for $\left( \left( \left( \mathcal{C} \setminus e_{1} \right) \setminus e_2 \right) \setminus \cdots \right) \setminus e_{i}$.

\medskip
Let $\MC$ be a $2$-uniform clutter. Then $\MC$ is nothing but a finite simple graph $G$.   By the theorem of Dirac \cite{Dirac},  the $2$-uniform clutter $\MC$ is chordal in our sense if and only if $G$ is a chordal graph. Thus, the definition of chordal graphs given here, is one of the possible natural  extensions of chordal graphs to chordal clutters.

In \cite[Remark 3.10]{MNYZ} it is shown that in analogy to Fr\"oberg's theorem,  the circuit ideal $I(\bar{\MC})$  of a $d$-uniform clutter $\C$ has a linear resolution over any field, if $\MC$ is chordal. The converse of this statement however, which is true for $2$-uniform clutters (graphs), is not known for general $d$-uniform clutters.

\medskip
In combinatorics it is common to consider multisets. These are sequences of elements whose order are disregarded. Notation is as for sets; for example, $\{1,2,2,3\}$ is a multiset. Elements may be repeated, but the order does not matter. For instance, the multisets $\{1,2,2,3\}$ and $\{3,2,1,2\}$ coincide.

\begin{defn}
Let $\C$ be a chordal clutter with a simplicial sequence $\textbf{e}= e_1, \ldots, e_r$ such that $\mathcal{C}_{e_1, \ldots, e_r} = \emptyset$, and let $N_1:=  \lvert \mathrm{N}_\C \left(e_1 \right)\rvert$ and $N_i := \lvert \mathrm{N}_{\C_{e_1 \cdots e_{i-1}}} \left( e_i \right) \rvert$, for $i>1$. Then the sequence $\textbf{e}$ is called the \textit{simplicial order} on $\C$ and the multiset $\left\{ N_1, \ldots, N_{r} \right\}$ is called the  \textit{simplicial multiset} of $\C$ associated with  $\textbf{e}$.
\end{defn}

\section{The $\textbf{f}$-vector and $\textbf{h}$-vector of a chordal clutter}

In this section we determine the $\textbf{f}$- and $\textbf{h}$-vector of a chordal clutter $\MC$ and show  that the multiset of a simplicial order of $\MC$ does not depend on the particular  given simplicial order.

\begin{prop}[$\textbf{f}$-vector of chordal clutters]
\label{fvector}
Let $\C$ be a $d$-uniform chordal clutter on the vertex the set $[n]$ with a simplicial order $\textbf{e}=e_1, \ldots, e_r$. Let $\left\{ N_1, \ldots, N_r \right\}$ be the simplicial multiset of $\textbf{e}$, $M_i = \sum_{k=1}^r {N_k \choose i}$ and $\Delta = \Delta \left( \C \right)$ be the clique complex of $\C$. Then
\begin{align}
\textbf{\rm{\textbf{f}}}_\Delta (t)& = \sum\limits_{i=0}^{d-1} {n \choose i} t^{i} + t^{d-1} \sum\limits_{i=1}^{r} M_i t^i \label{f-vector, No1}\\
& = \sum\limits_{i=0}^{d-1} {n \choose i}t^{i} + t^{d-1} \sum\limits_{i=1}^{r} \left( \left( 1 +t\right)^{N_i} -1 \right). \label{f-vector, No2}
\end{align}
In particular, $\dim \Delta = \max \left\{N_1, \ldots, N_r \right\} + (d-2)$.
\end{prop}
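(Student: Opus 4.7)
The plan is to prove (\ref{f-vector, No1}) by induction on the length $r$ of the simplicial order. The equivalence of (\ref{f-vector, No1}) and (\ref{f-vector, No2}) is then immediate from the binomial identity $\sum_{i\geq 1}\binom{N_k}{i}t^i = (1+t)^{N_k}-1$ summed over $k=1,\dots,r$, and the dimension assertion follows by reading off the top-degree term of (\ref{f-vector, No2}): the highest nonzero power of $t$ is $t^{d-1+\max_k N_k}$, corresponding to a face of dimension $\max_k N_k + (d-2)$.

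For the base case $r=0$, the clutter $\C$ is empty, so $\Delta$ consists of precisely the subsets of $[n]$ of size at most $d-1$ (each being a clique by convention) and nothing larger. Thus $\textbf{f}_\Delta(t)=\sum_{i=0}^{d-1}\binom{n}{i}t^i$, which matches the stated formula with an empty second sum.

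For the inductive step, set $\C'=\C\setminus e_1$ with clique complex $\Delta'=\Delta(\C')$. Then $e_2,\dots,e_r$ is a simplicial order of $\C'$ with the same associated integers $N_2,\dots,N_r$, so by the induction hypothesis
$$\textbf{f}_{\Delta'}(t) \;=\; \sum_{i=0}^{d-1}\binom{n}{i}t^i \;+\; t^{d-1}\sum_{k=2}^{r}\bigl((1+t)^{N_k}-1\bigr).$$
Hence it suffices to show that $\textbf{f}_\Delta(t)-\textbf{f}_{\Delta'}(t)=t^{d-1}((1+t)^{N_1}-1)$. The crucial combinatorial step is the face-set identity
$$\Delta\setminus\Delta' \;=\; \{F : e_1 \subset F \subseteq \mathrm{N}_\C[e_1],\ |F|\geq d\}.$$
For $\supseteq$, simpliciality of $e_1$ means that $\mathrm{N}_\C[e_1]$ is a clique in $\C$, so every subset $F\subseteq\mathrm{N}_\C[e_1]$ lies in $\Delta$; and if $e_1\subset F$ with $|F|\geq d$, picking any $c\in F\setminus e_1$ gives a $d$-subset $e_1\cup\{c\}\subset F$ that belongs to $\C\setminus\C'$, so $F\notin\Delta'$. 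For $\subseteq$, if $F\in\Delta\setminus\Delta'$ then some $d$-subset $G\subseteq F$ lies in $\C\setminus\C'$, forcing $e_1\subset G\subseteq F$; and for every $c\in F\setminus e_1$ the $d$-subset $e_1\cup\{c\}$ of the clique $F$ belongs to $\C$, whence $c\in\mathrm{N}_\C(e_1)$. Writing each such $F$ uniquely as $e_1\cup T$ with $T\subseteq\mathrm{N}_\C(e_1)$ and $|T|=j\geq 1$ produces exactly $\binom{N_1}{j}$ faces of cardinality $d-1+j$, contributing $\sum_{j\geq 1}\binom{N_1}{j}t^{d-1+j}=t^{d-1}((1+t)^{N_1}-1)$ to $\textbf{f}_\Delta(t)-\textbf{f}_{\Delta'}(t)$, as required.

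The principal obstacle is the face-set identity above; it is precisely where simpliciality of $e_1$ is used, and everything else reduces to routine binomial bookkeeping.
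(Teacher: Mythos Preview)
Your argument is correct and follows essentially the same route as the paper: both proceed by induction (you on $r$, the paper on the number of circuits), reduce to computing $\textbf{f}_\Delta(t)-\textbf{f}_{\Delta'}(t)$ for $\C'=\C\setminus e_1$, and identify this difference with $t^{d-1}\bigl((1+t)^{N_1}-1\bigr)$. The paper dispatches the key step with ``we may easily verify,'' whereas you supply the explicit face-set identity $\Delta\setminus\Delta'=\{F: e_1\subset F\subseteq \mathrm{N}_\C[e_1],\ |F|\geq d\}$ and its proof, which is a welcome elaboration rather than a departure.
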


\begin{proof}
First note that, $f_{i-1} = {n \choose {i}}$, for $0 \leq i \leq d-1$. In order to obtain (\ref{f-vector, No1}), let $\C'=\C \setminus e_1$ and $\Delta'= \Delta\left( \C' \right)$. Since $e_1$ is a simplicial submaximal circuit of $\C$, we may easily verify that
\begin{equation} \label{f-vector No3}
\textbf{f}_\Delta(t) - \textbf{f}_{\Delta'}(t) = {{N_1} \choose 1}t^{d} + \cdots + {{N_1} \choose {N_1}}t^{( d-1) + N_1}.
\end{equation}
Since $\C'$ is again chordal with the simplicial order $e_2, \ldots, e_r$, we may use induction (on the number of circuits of $\C$), to obtain
$$\textbf{f}_{\Delta'}(t) =  \sum\limits_{i=0}^{d-1} {n \choose {i}} t^i +t^{d-1} \sum\limits_{i=1}^{r} M'_i t^{i},$$
where $M'_i = \sum_{k=2}^r {N_k \choose i}$. Now, equation~(\ref{f-vector No3}) yields (\ref{f-vector, No1}). The equality~(\ref{f-vector, No2}) is now obvious, by comparing the coefficients of two polynomials.
\end{proof}

 Note that a chordal clutter may have different simplicial orders. A simple example where this phenomenon appears   is the chordal clutter $\MC=\{123,124,134,234,145\}$ which has the simplicial orders $\textbf{e}= 12,23,13,14$ and $\textbf{e}'=15,14,12,23$.   Here and in the sequel it is convenient to denote the set $\{i_1,\ldots,i_k\}$ by $i_1i_2\ldots i_k$. The simplicial multiset for $\textbf{e}$ is $\{2,1,1,1\}$ and that of $\textbf{e}'$ is $\{1,2,1,1\}$. Thus  the simplicial multisets of $\textbf{e}$ and $\textbf{e}'$ coincide,  which happens to be the case  not by accident. Indeed,  Proposition~\ref{fvector} yields the following remarkable result.

 \begin{cor}
 \label{remarkable}
 Let $\MC$ be a chordal clutter with simplicial orders $\textbf{\rm \textbf{e}}$ and $\textbf{\rm \textbf{e}}'$. Then the simplicial multisets associated with  $\textbf{\rm \textbf{e}}$ and $\textbf{\rm \textbf{e}}'$ coincide. In particular, all simplicial orders of $\MC$  have the same length.
 \end{cor}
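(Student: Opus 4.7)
The plan is to exploit Proposition~\ref{fvector}. Since the clique complex $\Delta = \Delta(\MC)$ depends only on $\MC$ and not on the chosen simplicial order, its $\textbf{f}$-polynomial $\textbf{f}_\Delta(t)$ is an invariant of $\MC$. Formula~(\ref{f-vector, No2}) therefore forces the polynomial
\[
h(t) \ :=\ \sum_{i=1}^{r}\bigl((1+t)^{N_i}-1\bigr)\ =\ t^{-(d-1)}\!\left(\textbf{f}_\Delta(t)-\sum_{i=0}^{d-1}\binom{n}{i}t^i\right)
\]
to be the same no matter which simplicial order of $\MC$ is used to compute its left-hand side.

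Next I would recover $r$ from $h(t)$. The key observation is that $N_i\ge 1$ for every $i$: since $e_i$ is a submaximal circuit of $\MC_{e_1\cdots e_{i-1}}$, by definition there exists a circuit of the form $e_i\cup\{c\}$ in $\MC_{e_1\cdots e_{i-1}}$, so $c\in\mathrm{N}_{\MC_{e_1\cdots e_{i-1}}}(e_i)$. Consequently $(1-1)^{N_i}-1=-1$ for each $i$, and evaluating at $t=-1$ yields $h(-1)=-r$. Hence $r$, the length of the simplicial order, is already an invariant of $\MC$, which is the ``in particular'' clause of the corollary.

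Once $r$ is known, the polynomial $h(t)+r = \sum_{i=1}^{r}(1+t)^{N_i}$ is itself an invariant of $\MC$. Performing the substitution $s=1+t$ turns it into $\sum_{i=1}^{r}s^{N_i}$, and the coefficient of $s^k$ in this polynomial is precisely the multiplicity with which $k$ appears in the multiset $\{N_1,\dots,N_r\}$. Hence the multiset itself is determined by $\MC$, proving the first assertion. The only step requiring genuine care is the derivation of the polynomial identity in Proposition~\ref{fvector}; everything in the present argument is a formal extraction of combinatorial data from the invariant $h(t)$, so no real obstacle remains.
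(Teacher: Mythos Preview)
Your proof is correct and follows essentially the same approach as the paper's: both extract the invariant polynomial $\sum_i\bigl((1+t)^{N_i}-1\bigr)$ from Proposition~\ref{fvector}, recover $r$ by evaluating at $t=-1$ (equivalently, the paper substitutes $t\mapsto t-1$ and then sets $t=0$), and read off the multiset from the coefficients of $\sum_i s^{N_i}$. Your version is in fact slightly more careful, since you explicitly justify $N_i\ge 1$, a point the paper uses implicitly.
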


\begin{proof}
Let $\{N_1,\ldots,N_r\}$ be the simplicial multiset associated with  $\textbf{e}$, and $\{N_1',\ldots,N_s'\}$ be the simplicial multiset associated with  $\textbf{e}'$. Then (\ref{f-vector, No2}) implies that
\[
\sum_{i=1}^{r} \left( \left( 1+t \right)^{N_i} -1 \right)=\sum_{i=1}^{s} \left( \left( 1+t \right)^{N_i'} -1 \right)
\]
The substitution $t\mapsto t-1$ then yields the identity
\[
\sum_{i=1}^{r} \left(t^{N_i} -1 \right)=\sum_{i=1}^{s} \left(t^{N_i'} -1 \right).
\]
Substituting then $t$ by $0$, we see that $r=s$, and consequently $\sum_{i=1}^{r} t^{N_i}=\sum_{i=1}^rt^{N_i'}$. Note that $\sum_{i=1}^{r} t^{N_i}=\sum_ja_jt^{N_j}$, where $a_j=|\{i\:\; N_i=j\}|$, and similarly $\sum_{i=1}^rt^{N_i'}=\sum_jb_jt^{N'_j}$, where  $b_j=|\{i\:\; N'_i=j\}|$. Since $a_j=b_j$ for all $j$, it follows that the simplicial multisets associated with  $\textbf{e}$ and $\textbf{e}'$ are equal, as desired.
\end{proof}

Since the multisets of any two simplicial orders of a $d$-uniform chordal clutter $\MC$ coincide, we call this common multiset, the {\em multiset} of $\MC$. Let $\{N_1,\ldots,N_r\}$ be the multiset of $\MC$. Then, as we have seen in Proposition~\ref{fvector}, the dimension of $\Delta(\MC)$ is equal to $N+(d-2)$, where $N=\max\{N_1,\ldots,N_r\}$.

\begin{cor}
\label{h-vector}
Let $\MC$ be a $d$-uniform chordal clutter with multiset $\{N_1,\ldots,N_r\}$ and $\Delta = \Delta (\C)$ be the clique complex of $\C$. Then
\[
\textbf{h}_\Delta (t) =\sum_{i\geq 0} h_it^i
=\sum_{i=0}^{d-1}{n\choose i}t^i(1-t)^{N+d-i-1}+t^{d-1}\sum_{i=1}^r\left((1-t)^{N-N_i}-(1-t)^{N}\right).
\]
\end{cor}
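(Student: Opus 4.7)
The plan is to derive the formula for $\textbf{h}_\Delta(t)$ from the formula for $\textbf{f}_\Delta(t)$ in Proposition~\ref{fvector} via the standard change-of-variable $t \mapsto t/(1-t)$ provided by Proposition~\ref{Formula for h-vector}.

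First I would recast Proposition~\ref{Formula for h-vector} in generating-function form. Writing $\textbf{f}_\Delta(t) = \sum_{i=0}^{\delta} f_{i-1}\, t^i$ with $\delta = \dim\Delta + 1$, the identity $\sum_{i=0}^{\delta} h_i t^i = \sum_{i=0}^{\delta} f_{i-1}\, t^i(1-t)^{\delta-i}$ can be rewritten as
\[
\textbf{h}_\Delta(t) \;=\; (1-t)^{\delta}\, \textbf{f}_\Delta\!\left(\frac{t}{1-t}\right).
\]
By Proposition~\ref{fvector} (and the dimension formula stated just after Corollary~\ref{remarkable}) we have $\delta = N + d - 1$.

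Next I would substitute into formula~(\ref{f-vector, No2}) of Proposition~\ref{fvector}. The crucial simplification is the identity
\[
1 + \frac{t}{1-t} \;=\; \frac{1}{1-t},
\]
which turns $\bigl(1 + t/(1-t)\bigr)^{N_i}$ into $(1-t)^{-N_i}$. Thus
\[
\textbf{f}_\Delta\!\left(\frac{t}{1-t}\right) \;=\; \sum_{i=0}^{d-1}\binom{n}{i}\frac{t^i}{(1-t)^i} \;+\; \frac{t^{d-1}}{(1-t)^{d-1}}\sum_{i=1}^{r}\left(\frac{1}{(1-t)^{N_i}} - 1\right).
\]
Multiplying through by $(1-t)^{N+d-1}$ distributes the factor into each summand; in the first sum one is left with $(1-t)^{N+d-1-i}$, and in the second sum the prefactor becomes $t^{d-1}(1-t)^{N}$, which combines with the bracketed term to give exactly $t^{d-1}\bigl((1-t)^{N-N_i} - (1-t)^N\bigr)$. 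This yields the asserted formula.

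I do not expect any real obstacle: the only point to watch is that $\delta = N + d - 1$ is indeed the dimension (plus one) of $\Delta(\MC)$, which is exactly what the last assertion of Proposition~\ref{fvector} guarantees, and the algebraic simplification rests entirely on the single identity $1 + t/(1-t) = 1/(1-t)$.
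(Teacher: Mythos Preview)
Your proposal is correct and follows essentially the same approach as the paper: both derive $\textbf{h}_\Delta(t)$ from the expression for $\textbf{f}_\Delta(t)$ in Proposition~\ref{fvector} via the substitution $t\mapsto t/(1-t)$ coming from Proposition~\ref{Formula for h-vector}, obtaining $\textbf{h}_\Delta(t)=(1-t)^{N+d-1}\,\textbf{f}_\Delta\bigl(t/(1-t)\bigr)$ and then simplifying. You in fact spell out the simplification (using $1+t/(1-t)=1/(1-t)$) in more detail than the paper, which simply says ``Evaluating the right hand side of the equation yields the desired conclusion.''
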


\begin{proof}
We use the identity $\textbf{f}_\Delta(t) = \sum_{i=0}^{\delta+1} h_i t^i(1+t)^{\delta+1-i}$  from Proposition~\ref{Formula for h-vector}, where $\delta =N+(d-2)$ is the dimension of the clique complex of $\MC$, and apply the substitution $t\mapsto t/(1-t)$ to obtain
\[
\sum_{i=0}^{\delta+1} h_i t^i=(1-t)^{\delta+1}\textbf{f}_\Delta(t/(1-t)).
\]
Evaluating the right hand side of the equation yields the desired conclusion.
\end{proof}

A precise formula for the $\textbf{h}$-vector, in terms of the multiset of the given chordal clutter can be directly obtained from Proposition~\ref{Formula for h-vector}(i). Indeed,
let $\C$ be a $d$-uniform chordal clutter on the vertex set $[n]$, with (unique) multiset $\mathcal{N} =\left\{ N_1, \ldots, N_r \right\}$. Let $\Delta = \Delta \left( \C\right)$ be its clique complex, $M_i = \sum_{k=1}^{r} {N_k \choose i}$ and $N=\mathop{\max}\limits_{1 \leq i \leq r} \{ N_i \}$. If $\textbf{h} = \left( h_i \right)$ is the $\textbf{h}$-vector of $\Delta$, then  $h_k=0$ for $k> N+(d-1)$,  and
\begin{align*}
h_k = \begin{cases}
\sum\limits_{i=0}^k \left( -1 \right)^{k-i} {{N+d-1-i} \choose {k-i}} {n \choose i}, & \text{if } 0 \leq k \leq d-1 \\
\sum\limits_{i=0}^{d-1} \left( -1 \right)^{k-i} {{N+d-1-i} \choose {k-i}} {n \choose i} + \sum\limits_{i=d}^{k} \left( -1 \right)^{k-i} {{N+d-1-i} \choose {k-i}} M_{i-d+1}, & \text{if } d \leq k \leq N+d-1.
\end{cases}
\end{align*}
In particular, if $\dim \Delta = d-1$, then $\mathcal{N} = \left\{ 1, \ldots, 1 \right\}$ and $h_i =0$, for all $i>d$. Moreover,
\begin{align} \label{h-vector of forest}
h_k = \begin{cases}
\sum\limits_{i=0}^k \left(-1 \right)^{k-i} {{d-i} \choose {k-i}} {n \choose i}, & \text{if } 0 \leq k \leq d-1 \\
r + \sum\limits_{i=0}^{d-1} \left(-1 \right)^{d-i} {n \choose i}, & \text{if } k=d.
\end{cases}
\end{align}
It is worth to note that $r = M_1 = | \C | = e \left( I_\Delta \right)$, where $e \left( I_\Delta \right)$ denotes the multiplicity of $I_\Delta$. We may use formula~(\ref{h-vector of forest}) in order to compute the Betti numbers of the circuit ideal of a forest (i.e. a graph with no cycle).

\section{The Betti numbers of the circuit ideal of a chordal clutter}

Let $I\subset S$ be a graded ideal with a $d$-linear resolution and $\dim S/I= \delta$.
By \cite[Lemma~4.1.13]{BH} it follows that
\begin{align}\label{betti}
1+ \sum\limits_{i=0}^{n-1} \left( -1 \right)^{i+1} \beta_i t^{i+d} = \left( 1-t \right)^{n-\delta} \sum\limits_{i=0}^{\delta+1} h_i t^i.
\end{align}
Here $\beta_i$ is the $i$th Betti number of $I$. In other words, if $\mathbb{F}$ is the graded minimal free $S$-resolution of $I$,  then $\beta_i$ is the rank of the free $S$-module $F_i$.

\medskip
Now we apply (\ref{betti}) to  the circuit ideal of a chordal clutter. Then by  Corollary~\ref{h-vector}, we obtain the following result.

\begin{cor}
\label{bettinumbers}
Let $\MC$ be a $d$-uniform chordal clutter with multiset $\{N_1,\ldots,N_r\}$ and set $\beta_i= \beta_i(I(\bar{\MC}))$ for all $i$. Then
\begin{align*}
1+\sum\limits_{i=0}^{n-1} \left( -1 \right)^{i+1} \beta_i t^{i+d}  = &\sum_{i=0}^{d-1}{n\choose i}t^i(1-t)^{n-i}\\
&+ t^{d-1}\sum_{i=1}^r\left((1-t)^{n-N_i -d + 1}-(1-t)^{n-d+1}\right).
\end{align*}


\end{cor}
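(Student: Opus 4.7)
The plan is to apply the identity (\ref{betti}) directly, feeding in the closed form for the $\textbf{h}$-vector from Corollary~\ref{h-vector}. Since $\MC$ is chordal, the circuit ideal $I(\bar{\MC})$ has a $d$-linear resolution over any field (by \cite[Remark~3.10]{MNYZ}), so the hypothesis needed to invoke (\ref{betti}) is in place. Write $\Delta=\Delta(\MC)$ and $\delta=\dim S/I_{\Delta}=\dim\Delta+1$. By Proposition~\ref{fvector} we have $\dim\Delta=N+(d-2)$ where $N=\max\{N_1,\ldots,N_r\}$, hence $\delta=N+d-1$, so that $n-\delta=n-N-d+1$.

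Next I would substitute the formula
\[
\sum_{i\geq 0}h_it^i=\sum_{i=0}^{d-1}\binom{n}{i}t^i(1-t)^{N+d-i-1}+t^{d-1}\sum_{i=1}^{r}\bigl((1-t)^{N-N_i}-(1-t)^{N}\bigr)
\]
from Corollary~\ref{h-vector} into the right-hand side of (\ref{betti}), multiplying through by $(1-t)^{n-N-d+1}$. Using the trivial exponent identities
\[
(1-t)^{n-N-d+1}(1-t)^{N+d-i-1}=(1-t)^{n-i},
\]
\[
(1-t)^{n-N-d+1}(1-t)^{N-N_i}=(1-t)^{n-N_i-d+1},\qquad (1-t)^{n-N-d+1}(1-t)^{N}=(1-t)^{n-d+1},
\]
the three summands collapse termwise to the right-hand side claimed in the statement. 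This gives the identity.

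There is essentially no obstacle; the only point requiring care is the bookkeeping of $\delta$, namely that $\delta=N+d-1$ rather than $N+d-2$, since $\delta$ is the Krull dimension of $S/I_\Delta$ and not the combinatorial dimension of $\Delta$. Once this is handled, the computation is a one-line substitution followed by collecting terms.
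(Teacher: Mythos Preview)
Your argument is correct and follows exactly the approach indicated in the paper: substitute the $\textbf{h}$-polynomial from Corollary~\ref{h-vector} into the identity~(\ref{betti}) and simplify. Your explicit bookkeeping of $\delta=N+d-1$ and the exponent cancellations is in fact more detailed than what the paper writes out.
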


As can be seen from this result, the Betti numbers $\beta_i(I(\bar{\MC}))$  can be expressed (in a complicated way) in terms of the multiset of $\MC$. In the following example, we use Corollary~\ref{bettinumbers}, to compute the Betti numbers of a forest. Recall that a forest, is a graph with no cycle. These graphs are obviously chordal, because every non-trivial subgraph of a forest has a vertex of degree $1$, which is obviously a simplicial submaximal circuit.

\begin{ex}[Betti numbers of the circuit ideal of a forest]
Let $G$ be a graph and $\Delta= \Delta(G)$ be its clique complex. One may check that the followings are equivalent:
\begin{itemize}
\item[\rm (a)] $G$ is chordal and $\dim \Delta = 1$;
\item[\rm (b)] $G$ is a forest.
\end{itemize}
Now, let $G$ be a forest on the vertex set $[n]$ and $I= I \left( \bar{G} \right)$ be the circuit ideal of $G$. Then, by (\ref{h-vector of forest}), we conclude that $\textbf{h}  = \left( 1, n-2, 0 \right)$ is the $\textbf{h}$-vector of $\Delta$. By formula~(\ref{betti}), (or directly by Corollary~\ref{bettinumbers}), we obtain
\begin{equation*}
\beta_i \left( I \right) = \left( i+1 \right) {n-1 \choose i+2}.
\end{equation*}
In particular, $\projdim (I) = n-3$.
\end{ex}

\medskip
A sequence $\beta=(\beta_0,\ldots,\beta_{n-1})$ of  integers is called the {\em Betti sequence} of a graded ideal, if there exists  a graded ideal $I\subset S=K[x_1,\ldots,x_n]$  such that $\beta_i=\beta_i(I)$, for all $i$.

The question arises whether all ideals which have a linear resolution occur as the circuit ideal of chordal clutters. In support of an affirmative answer to this question we have

\begin{thm}
\label{all}
Let $\beta=(\beta_0,\ldots,\beta_{n-1})$ be a sequence of integers. Then $\beta$ is a Betti sequence of a graded  ideal with linear resolution if and only if $\beta$  is the Betti sequence of the  circuit ideal of a chordal clutter.
\end{thm}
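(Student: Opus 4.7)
The forward direction is immediate from \cite[Remark~3.10]{MNYZ}: if $\MC$ is chordal then $I(\bar{\MC})$ has a linear resolution, so its Betti sequence is that of an ideal with linear resolution.

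For the converse, my plan is to turn a given ideal $I$ with $d$-linear resolution and Betti sequence $\beta$ into a squarefree monomial ideal, with the same Betti sequence, that is manifestly the circuit ideal of a chordal clutter. First pass to the reverse-lex generic initial ideal $J := \mathrm{gin}_{\mathrm{revlex}}(I)$; by Bayer--Stillman the regularity is preserved so $J$ is again $d$-linear, and since gin preserves the Hilbert function while linear-resolution ideals have Betti numbers determined by the Hilbert function, $\beta_i(J) = \beta_i(I)$ for all $i$. Working in characteristic zero for this purely numerical question, $J$ is strongly stable and generated in the single degree $d$. Polarize $J$ to obtain a squarefree strongly stable ideal $J^{\mathrm{pol}} \subset K[x_1, \ldots, x_n]$; polarization preserves Betti numbers. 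Setting $\bar{\MC} := \{F \subset [n] : |F|=d,\ \textbf{x}_F \in G(J^{\mathrm{pol}})\}$ and $\MC := \MC_{n,d} \setminus \bar{\MC}$ yields $I(\bar{\MC}) = J^{\mathrm{pol}}$, which realizes $\beta$, and only chordality of $\MC$ remains.

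Squarefree strong stability of $\bar{\MC}$ is equivalent to \emph{shiftedness} of $\MC$: if $F \in \MC$ and $j \in F$, $i \notin F$ with $i > j$, then $(F \setminus \{j\}) \cup \{i\} \in \MC$. My plan to construct a simplicial order is to process $(d-1)$-subsets in reverse lex order. At each stage, take $e$ to be the lex-largest submaximal circuit of the current clutter; shiftedness forces $N_\MC(e)$ to consist of vertices above a certain threshold, and a second application of shiftedness shows that every $d$-subset of $N_\MC[e]$ lies in $\MC$, so $e$ is simplicial. Deletion of $e$ preserves shiftedness on the remaining submaximal circuits, so the process iterates until $\MC$ is empty.

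The hard part is precisely this last step: the combinatorial verification that the lex-largest submaximal circuit is genuinely simplicial at each iteration, and that shiftedness is correctly inherited by the deleted clutter in the sense needed to continue the induction. This is a direct but delicate check using the explicit description of shifted clutters. Alternatively, one may short-circuit the argument by noting that shifted clutters fall among the families already proven to be chordal in the sense of \cite{MNYZ}, as surveyed in \cite{BYZ}.
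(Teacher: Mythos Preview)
Your approach is essentially the paper's: pass to $\Gin_{\mathrm{revlex}}(I)$, reduce to characteristic zero to get a strongly stable ideal, then pass to a squarefree strongly stable ideal with the same Betti numbers, and finally invoke the known fact that clutters with squarefree strongly stable circuit ideal are chordal. Two points deserve tightening.

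First, the phrase ``working in characteristic zero for this purely numerical question'' is not self-justifying. If $\chara K=p>0$, then $\Gin(I)$ is only guaranteed to be \emph{stable} (Borel-fixed in the weak sense), not strongly stable. The paper fills this gap exactly as you would want: by \cite[Lemma~1.4]{CHH}/\cite[Proposition~10]{ERT} the generic initial ideal is stable in any characteristic, and the Eliahou--Kervaire resolution shows that the Betti numbers of a stable monomial ideal are characteristic-independent; only then may one transport the ideal to characteristic zero and take $\Gin$ a second time to obtain a genuinely strongly stable ideal.

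Second, ``polarize $J$ to obtain a squarefree strongly stable ideal'' is imprecise. Ordinary polarization of a strongly stable ideal need not be squarefree strongly stable with respect to any natural variable order. What you want is the \emph{stretching} (squarefree) operator $u\mapsto u^{\sigma}$ of \cite[Lemmas~11.2.5--11.2.6]{HH}, which sends strongly stable to squarefree strongly stable while preserving the Betti sequence; this is what the paper invokes. Your final step---either the explicit reverse-lex simplicial order on a shifted clutter or a citation---matches the paper's citation of \cite[Theorem~2.4]{XY}.
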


\begin{proof}
We need only to show that if $\beta$  is the Betti sequence of a graded  ideal $I$ with linear resolution, then there exists  a chordal clutter whose circuit ideal has the same Betti numbers. We may assume that $K$ is infinite. We denote by $\Gin(I)$ the generic initial ideal of $I$ with respect to the reverse lexicographic order induced by $x_1>x_2>\cdots >x_n$. It follows from  the theorem of Bayer and Stillman \cite{BS} (see also \cite[Corollary 4.3.18(d)]{HH}) that $\Gin(I)$ has again a linear resolution. Furthermore, since $S/I$ and $S/\Gin(I)$ have the same Hilbert series, we conclude that the Betti sequence of $I$ coincides with that of $\Gin(I)$. It has been noted in \cite[Lemma 1.4]{CHH}, as a consequence of a result of Eisenbud et al. \cite[Proposition 10]{ERT}, that $\Gin(I)$ is stable (independent of the characteristic of $K$). The Eliahou-Kervaire resolution of a stable ideal (see \cite{EK} or \cite[Corollary 7.2.3]{HH}) shows that the  Betti numbers of a stable ideal do not depend on the characteristic of $K$. Thus we may now assume that $\chara(K)=0$. Taking now the generic initial ideal, but this time of $\Gin(I)$,  we obtain by \cite[Proposition 4.2.6(a)]{HH} a strongly stable ideal which, by the similar  arguments as above, has a linear resolution with the same Betti sequence as that of $\Gin(I)$, and hence that of $I$.
The lemmata \cite[Lemma 11.2.5]{HH} and \cite[Lemma 11.2.6]{HH} say that if $I$ is a strongly stable ideal, then  the stretched ideal $I^\sigma$ is squarefree strongly stable and admits the same Betti sequence. Thus we see that the set  of Betti sequences of graded ideals with linear resolution coincides with the set of Betti sequences of  squarefree strongly stable ideals. Now we use the result, shown in \cite[Theorem 2.4]{XY}, that a clutter is chordal if its  circuit ideal is squarefree strongly stable ideal. This completes the proof.
\end{proof}

\section{The multiset of a chordal clutter}

By Corollary~\ref{bettinumbers} and Theorem~\ref{all}, the  multiset of a $d$-uniform chordal clutter parametrize  all possible Betti sequences of ideals with $d$-linear resolution. But what are the possible multisets of $d$-uniform chordal clutters?

For a given multiset $\MN=\{N_1,N_2,\ldots, N_r\}$ of positive integers we define the numbers
\[
\lambda_i(\MN)=|\{j\colon \; |N_j|=i\}| \quad\text{for}\quad  i=1,2,\ldots.
\]
Obviously, for any sequence of non-negative integers $\lambda_1,\lambda_2,\ldots,$ with $\lambda_i=0$ for $i\gg0$, there exists a unique multiset $\MN$ with $\lambda_i(\MN)=\lambda_i$ for all $i$. This shows that the set of multisets of positive integers is  in bijection to the set of sequences of non-negative integers which are eventually constant zero.

Let $\MN$ be the multiset of a chordal clutter $\MC$. For short, we set $\lambda_i\left( \mathcal{C} \right):=\lambda_i\left( \mathcal{N} \right)$ for all $i$. The sequence $\lambda_1 \left( \mathcal{C} \right), \lambda_2 \left( \mathcal{C} \right), \ldots$ is called the \textit{$\lambda$-sequence} of $\mathcal{C}$. Then, to characterize the possible multisets of chordal clutters is equivalent to characterize those sequences $\lambda_1,\lambda_2,\ldots $ of non-negative integers which are eventually constant zero and such that there exists a chordal clutter $\MC$ with $\lambda_i=\lambda_i(\MC)$, for all $i$.

\medskip
As shown in Proposition~\ref{fvector} and in the proof of Corollary~\ref{remarkable}, the multiset of a chordal clutter, and hence its $\lambda$-sequence,  is determined by the $\textbf{f}$-vector of the clique complex of the clutter, and vice versa. On the other hand, the $\textbf{f}$-vector and the $\textbf{h}$-vector of a simplicial complex determine each other as well. The same holds true for the $\textbf{h}$-vector of a simplicial complex $\Delta$ and the Betti sequence of  its Stanley--Reisner ideal $I_\Delta$, provided that $I_\Delta$ has a linear resolution. In conclusion we see that the $\lambda$-sequence of a chordal clutter and the Betti sequence of its circuit ideal determine each other, and consequently, due to Theorem~\ref{all},  all possible $\lambda$-sequences of chordal clutters appear as $\lambda$-sequences of clutters whose circuit ideal is squarefree strongly stable. We may therefore assume that $\MC$ is a $d$-uniform clutter whose circuit ideal  $I(\bar{\MC})$ is squarefree strongly stable. Thus,  in order to identify the possible $\lambda$-sequence of a chordal clutter we have to recall some facts about squarefree strongly stable ideals.

Let $S=K[x_1,\ldots,x_n]$ be the polynomial ring over the field $K$ in the indeterminates $x_1,\ldots,x_n$, and let $I\subset S$ be a squarefree strongly stable ideal generated in degree $d$. As usual,  $G(I)$ denotes the unique minimal set of monomial generators of $I$. We furthermore set $G_i(I)=\{u\in G(I)\:\; m(u)=i\}$, where $m(u)=\max\{j\:\; \text{$x_j$ divides $u$} \}$.  The cardinality of $G_i(I)$ will be denoted by  $m_i(I)$. These numbers are important invariants  for squarefree strongly stable ideals.  Let $I_{[j]}$ be the ideal generated by all squarefree monomials of degree $j$ in $I$. Then $I_{[j]}$ is again squarefree strongly stable. Finally, we denote by $\mu_j(I)$ the number of monomial generators of  $I_{[j]}$.

\begin{lem}
\label{generators}
Let $I\subset S$ be a squarefree strongly stable ideal generated in degree $d$. Then
\[
\mu_{d+j}(I)=\sum_{i=0}^{n-d}{n-d-i\choose j}m_{d+i}(I),
\]
for all $j\geq 0$.
\end{lem}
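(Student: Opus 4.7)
The plan is to prove a decomposition statement for squarefree monomials in $I$ and then count. I will use throughout that for a squarefree monomial ideal $J$, the minimal monomial generators of $J$ are precisely the squarefree monomials in $J$ that are not properly divisible by any other such monomial of strictly smaller degree; in particular, since squarefree monomials of the \emph{same} degree can only divide each other when equal, $\mu_{d+j}(I)$ equals the total number of squarefree monomials of degree $d+j$ lying in $I$. Thus the task reduces to counting these monomials.

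The central step is to establish the following unique decomposition, which is the squarefree analogue of the Eliahou--Kervaire decomposition: every squarefree monomial $w\in I$ can be written uniquely as $w=u\cdot v$, where $u\in G(I)$ and either $v=1$ or $\min\supp(v)>m(u)$. For existence, write $w=x_{i_1}\cdots x_{i_k}$ with $i_1<\cdots<i_k$ and set $u=x_{i_1}\cdots x_{i_d}$; starting from any $u'\in G(I)$ dividing $w$ and using the squarefree strongly stable exchange property to successively replace each variable of $u'$ by the smaller indices available in $w$, one arrives at $u$, and since $u$ has degree $d$ one concludes $u\in G(I)$. For uniqueness, the condition $\min\supp(v)>m(u)$ forces $u$ to consist exactly of the $d$ smallest-indexed variables occurring in $w$, which determines $u$ (and hence $v$) uniquely. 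The main subtlety here is making the exchange argument precise, checking at each step that squarefreeness is preserved because the variable being introduced is not already present in the current monomial.

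With the decomposition in hand the counting is immediate. Partitioning the squarefree monomials of degree $d+j$ in $I$ according to the generator $u$ that appears in their decomposition, I obtain
\[
\mu_{d+j}(I)=\sum_{u\in G(I)}\#\bigl\{v\:\; v\text{ squarefree, }\deg v=j,\ \min\supp(v)>m(u)\bigr\}.
\]
For $u\in G_{d+i}(I)$, the admissible $v$'s are in bijection with $j$-subsets of $\{d+i+1,\dots,n\}$, giving $\binom{n-d-i}{j}$ choices. Since $m(u)\in\{d,d+1,\dots,n\}$ for any degree-$d$ squarefree generator, grouping the sum by the value of $i=m(u)-d$ yields
\[
\mu_{d+j}(I)=\sum_{i=0}^{n-d}\binom{n-d-i}{j}\,m_{d+i}(I),
\]
as required. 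The only genuinely non-routine ingredient is the decomposition lemma; once this is in place the combinatorial bookkeeping is straightforward.
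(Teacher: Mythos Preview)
Your argument is correct and is essentially the same as the paper's: both establish the squarefree Eliahou--Kervaire decomposition $G(I_{[d+j]})=\bigcup_{i} G_{d+i}(I)\cdot S_{[d+i+1,j]}$ (existence via the strongly stable exchange toward the $d$ smallest indices of $w$, uniqueness because the condition $\min\supp(v)>m(u)$ forces $u$ to be the product of those smallest indices), and then count each block as $\binom{n-d-i}{j}\,m_{d+i}(I)$. Your preliminary observation that $\mu_{d+j}(I)$ equals the number of squarefree degree-$(d+j)$ monomials in $I$ is just a restatement of the definition of $I_{[d+j]}$, so there is no substantive difference from the paper's proof.
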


\begin{proof}
Let $S_{[i,j]}$ be the set of all squarefree monomials $u$ in $S$ of degree $j$ with the property that $\min(u)=\min\{k \colon \; x_j \text{ divides } u \} \geq i$.

We claim that
\begin{eqnarray}
\label{disjoint}
G(I_{[d+j]})= \Union_{i=0}^{n-d}G_{d+i}(I)S_{[d+i+1,j]},
\end{eqnarray}
and that the sets in this union are pairwise disjoint.

Indeed, let $v\in G(I_{[d+j]})$. Then there exist $u\in G(I)$ and $w\in S_j$ such that $v=uw$. Let $i=m(u)$ and suppose that there exists $j<i$ such that $x_j$ divides $w$. Then $v=u'w'$ with $u'=(u/x_i)x_j$ and $w'=(w/x_j)x_i$. Since $I$ is strongly stable, it follows that $u'\in G(I)$. Thus, after finitely modifications like this, we arrive at a presentation $v=u''v''$ with $m(u'')< \min(w'')$. Therefore, if $m(u'')=d+i$,  then  $v\in G_{d+i+1}(I) S_{[d+i+1,j]}$. This shows that $G(I_{[d+j]})$ is the union of the sets $G_{d+i}(I)S_{[d+i+1,j]}$.

It remains to be shown that this union is disjoint. To see this, assume that $$G_{d+i}(I)S_{[d+i+1,j]}\sect G_{d+i'}(I)S_{[d+i'+1,j]}\neq \emptyset,$$ and let $v$ be a monomial in this intersection. Then $v=uw=u'w'$ with $d+i=m(u)<\min(w)$ and $d+i'=m(u')<\min(w')$.

Write  $v=x_{k_1}x_{k_2}\cdots x_{k_{d+j}}$ with $k_1<k_2<\cdots <k_{d+j}$. Then if follows that $d+i=k_d=d+i'$.  Thus, if $i\neq i'$,  then the above intersection is the empty set.

Now it follows from (\ref{disjoint}) that $\mu_{d+j}(I)=\sum_{i=0}^{n-d}|G_{d+i}(I)S_{[d+i+1,j]}|$, and since $$|G_{d+i}(I)S_{[d+i+1,j]}|={n-d-i\choose j}m_{d+i}(I),$$ the desired result follows.
\end{proof}

Lemma~\ref{generators} yields the following polynomial identity

\begin{eqnarray}
\label{muidentity}
\sum_{j=0}^{n-d}\mu_{d+j}(I)t^j=\sum_{j=0}^{n-d}m_{j+d}(I)(1+t)^{n-d-j}.
\end{eqnarray}

\medskip
Now let $\MC$ be a chordal $d$-uniform clutter on the vertex set $[n]$ whose $\lambda$-sequence we want to determine. As explained above, we may assume that $I(\bar{\MC})$ is squarefree strongly stable. Let $\Delta=\Delta(\MC)$ be the clique complex of $\MC$. Then $I_\Delta=I(\bar{\MC})$. Let $\textbf{f}= \left( f_i \right)$ be the $\textbf{f}$-vector of $\Delta$. Together with (\ref{muidentity}) it follows that
\begin{equation}
\label{fm}
\begin{split}
\sum_{j=0}^{n-d}f_{d-1+j}t^j & = \sum_{j=0}^{n-d}{n\choose d+j}t^j-\sum_{j=0}^{n-d}\mu_{d+j}(I(\bar{C}))t^j\\ 
&= \sum_{j=0}^{n-d}{n\choose d+j}t^j- \sum_{j=0}^{n-d}m_{d+j}(1+t)^{n-d-j},
\end{split}
\end{equation}
where by definition,  $m_{d+j}=m_{d+j}(I(\bar{\MC}))$ for all $j$.

On the other hand, Proposition~\ref{fvector} gives us
\begin{align}
\label{prop}
\sum_{j=0}^{n-d}f_{d-1+j}t^j=t^{-1}\sum_{j=1}^{n-d+1}\lambda_j \left((1+t)^j-1 \right).
\end{align}

Combining (\ref{fm}) and (\ref{prop}) we obtain
\begin{align}
\label{mulambda}
\sum_{j=0}^{n-d}m_{j+d}t(1+t)^{n-d-j}=\sum_{j=0}^{n-d}{n\choose d+j}t^{j+1}- \sum_{j=1}^{n-d+1}\lambda_j \left( (1+t)^j-1 \right).
\end{align}

Now we apply to (\ref{mulambda}) the substitution $t\mapsto s-1$ and obtain
\begin{equation} \label{subst}
\begin{split}
\sum_{j=0}^{n-d+1}(m_{n-j+1}-m_{n-j})s^j& = \sum_{j=0}^{n-d}m_{d+j}(s-1)s^{n-d-j}\\ 
&= \sum_{j=0}^{n-d}{n\choose d+j}(s-1)^{j+1}- \sum_{j=1}^{n-d+1}\lambda_j(s^j-1),
\end{split}
\end{equation}
where  $m_{d-1}=m_{n+1}=0$.

\medskip
We define the integers $\alpha_j$ by the identity
\[
\sum_{j=0}^{n-d+1}\alpha_js^j= \sum_{j=0}^{n-d}{n\choose d+j}(s-1)^{j+1}.
\]
More explicitly,
\begin{equation} \label{Def of alpha_i}
\alpha_k = \sum_{j=k}^{n-d+1} \left( -1 \right)^{j-k} {n \choose d+j-1} {j \choose k}.
\end{equation}

Then (\ref{subst}) implies that
\begin{align}
\label{first}
-m_n= \alpha_0+\sum_{j=1}^{n-d+1}\lambda_j,
\end{align}
and
\begin{align}
\label{second}
m_{n-j+1}-m_{n-j}=\alpha_j-\lambda_j\quad \text{for}\quad j=1,\ldots, n-d+1.
\end{align}
Adding up the equation in (\ref{first}) and the equations in (\ref{second}) we obtain
\begin{align}
\label{limi}
m_{n-j}=-\sum_{i=0}^j\alpha_i-\sum_{i=j+1}^{n-d+1}\lambda_i \quad \text{for}\quad j=0,\ldots,n-d.
\end{align}
Reversing these equations we finally find that
\begin{equation} \label{mili}
\begin{split}
\lambda_{n-d+1}&= -\sum_{i=0}^{n-d}\alpha_i-m_d,\quad  \text{and}\\ 
\lambda_{n-d-i}&= \alpha_{n-d-i} + m_{d+i}-m_{d+i+1} \quad \text{for} \quad i=0,\ldots,n-d-1.
\end{split}
\end{equation}

\medskip
Recall, the $i$th Macaulay representation of a positive  integer $a$ is the unique presentation of $a$ as
\[
a=\binom{a(i)}{i}+ \binom{a(i-1)}{i-1}+\cdots +\binom{a(j)}{j} \ \ \mbox{with } \ a(i)>a(i-1)>\cdots>a(j)\geq 1.
\]
One  defines
\[
a^{\langle i \rangle}=\binom{a(i)+1}{i+1}+ \binom{a(i-1)+1}{i}+\cdots +\binom{a(j)+1}{j+1}.
\]
\medskip
A sequence  $l_0,l_1,\ldots,l_m$ of integers is called an   {\em $M$-sequence},   if $l_0=1$ and  $l_{i+1}\leq l_i^{\langle i \rangle}$ for  $i= 1,\ldots,m-1$. By a famous theorem of Macaulay (see for example   \cite[Theorem~4.2.10]{BH}), Hilbert functions of standard graded $K$-algebras are nothing but $M$-sequences.

\medskip
Now we are ready to formulate the main result of this section which characterizes all possible $\lambda$ sequences of chordal clutters.

\begin{thm}
\label{possiblelambda}
Given a sequence of integers $l_0,l_1,\ldots,l_{n-d}$ with $n>d>0$, let
\begin{align*}
\lambda_{n-d-i} & = \alpha_{n-d-i}+l_{i}-l_{i+1},
\end{align*}
for $i=0,\ldots,n-d-1$, where $\alpha_i$ is defined as in (\ref{Def of alpha_i}). Then the following conditions are equivalent:
\begin{enumerate}
\item[(a)] $\lambda_1,\lambda_2,\ldots,\lambda_{n-d}$ is the $\lambda$-sequence of a chordal $d$-uniform clutter $\MC$ on the vertex set $[n]$ with $\mathcal{C} \neq \mathcal{C}_{n,d}$.
\item[(b)] $l_0,l_1,\ldots,l_{n-d}$ is an $M$-sequence with $l_1\leq d$.
\end{enumerate}
\end{thm}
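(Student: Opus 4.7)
The plan is to reduce to the case of squarefree strongly stable ideals via Theorem~\ref{all}, identify the sequence $(m_{d+i}(I(\bar{\MC})))_{i=0}^{n-d}$ with the Hilbert function of a standard graded $K$-algebra of embedding dimension at most $d$, and then invoke Macaulay's theorem.

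First, I would note that by Theorem~\ref{all} the $\lambda$-sequence of $\MC$ depends only on the Betti sequence of $I(\bar{\MC})$, so one may assume $I:=I(\bar{\MC})$ is nonzero and squarefree strongly stable, generated in degree $d$. Strong stability forces $x_1\cdots x_d$ to be the unique element of $G(I)$ with $m(u)=d$, so $m_d(I)=1$. Since $\MC\neq \MC_{n,d}$ is equivalent to $I\neq 0$, the choice $l_i:=m_{d+i}(I)$ for $i=0,\ldots,n-d$ gives $l_0=1$ and, by (\ref{mili}), satisfies exactly the relation in the statement. The task becomes: characterize the sequences $(m_{d+i}(I))_{i=0}^{n-d}$ arising from nonzero squarefree strongly stable ideals $I\subset S$ generated in degree $d$.

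Next, I would introduce the bijection $\phi$ from squarefree monomials $u=x_{j_1}\cdots x_{j_d}$ (with $j_1<\cdots<j_d$) in $S$ onto monomials of degree at most $n-d$ in $T:=K[z_1,\ldots,z_d]$, given by
\[
\phi(u)\;:=\;z_1^{j_1-1}\,z_2^{j_2-j_1-1}\cdots z_d^{j_d-j_{d-1}-1},
\]
so that $\deg\phi(u)=m(u)-d$. The central combinatorial claim is: \emph{$I$ is squarefree strongly stable if and only if $M:=\phi(G(I))$ is a strongly stable order ideal in $T$}, meaning $M$ is closed both under divisibility and under the substitution $z_i\mapsto z_j$ for $j>i$. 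This is verified by case analysis on the lowering move $u\mapsto (u/x_{j_s})x_k$: an adjacent lowering (with $k\in(j_{s-1},j_s)$) corresponds to raising $z_s\mapsto z_{s+1}$ in $\phi(u)$ when $s<d$, and to dividing by a power of $z_d$ when $s=d$; the general non-adjacent case decomposes into a composition of these two types.

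Finally, writing $L:=T\setminus M$, the set $L$ is a strongly stable monomial ideal and $R:=T/L$ is a standard graded $K$-algebra of embedding dimension at most $d$ with $\dim_K R_i=|M\cap T_i|=m_{d+i}(I)=l_i$ and $R_i=0$ for $i\geq n-d+1$. By Macaulay's theorem (\cite[Theorem~4.2.10]{BH}), the Hilbert functions of such algebras are precisely the $M$-sequences with $l_0=1$ and $l_1\leq d$, and every such sequence is realized by a (strongly stable) lex-segment ideal. Both directions of (a)$\iff$(b) then follow: starting from a chordal $\MC$ one traverses the chain $\MC\to I\to M\to R$, and conversely, given such an $M$-sequence one reconstructs $R$, hence $M$, hence $I:=\phi^{-1}(M)$, whose associated clutter is chordal by \cite[Theorem~2.4]{XY}. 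The main obstacle I expect is the combinatorial verification in the third paragraph, where one must carefully track how the index-lowering operation on a squarefree monomial translates into divisibility and raising operations on $\phi(u)$; once this correspondence is in place, the remainder is a direct application of Macaulay's theorem.
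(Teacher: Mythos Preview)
Your argument follows the same skeleton as the paper's: reduce via Theorem~\ref{all} to clutters whose circuit ideal is squarefree strongly stable, set $l_i=m_{d+i}(I(\bar{\MC}))$, and invoke (\ref{mili}) to translate between the $\lambda$-sequence and the $l_i$. The paper then finishes in one line by citing \cite[Proposition~3.8]{Mu} for the characterization of the sequences $(m_{d+i}(I))$ as exactly the $M$-sequences with $l_1\leq d$. You instead supply a direct proof of that characterization via the explicit ``gap'' bijection $\phi$ between squarefree degree-$d$ monomials in $S$ and monomials of degree at most $n-d$ in $T=K[z_1,\ldots,z_d]$, together with Macaulay's theorem.

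What this buys you is self-containment: your route avoids the external reference to Murai. The cost is the combinatorial verification that squarefree strong stability of $I$ corresponds to $M=\phi(G(I))$ being an order ideal closed under the raising moves $z_i\mapsto z_j$ ($j>i$). Your sketch of this step is correct in outline; the cleanest way to justify that the non-adjacent lowering moves decompose is to observe that $G(I)$ is squarefree strongly stable if and only if it is a down-set for the componentwise partial order on index sequences $j_1<\cdots<j_d$, whose covering relations are precisely the single decrements $j_s\mapsto j_s-1$. Under $\phi$, a decrement at position $s<d$ becomes $z_s\mapsto z_{s+1}$ and a decrement at $s=d$ becomes division by $z_d$; closure under these two generates closure under all divisions and all raisings. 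One small point worth making explicit in the converse direction: given an $M$-sequence $l_0,\ldots,l_{n-d}$ with $l_1\leq d$, you should extend it by zeros before realizing it by a lex ideal in $T$, so that the resulting order ideal $M$ lies in degrees $\leq n-d$ and $\phi^{-1}$ lands in $K[x_1,\ldots,x_n]$.
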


\begin{proof}
As explained above, we may restrict ourselves to  $d$-uniform clutters whose circuit ideal is strongly stable. The condition $\MC\neq \C_{n,d}$ makes sure that $I(\bar{\MC})\neq 0$. We set $l_j=m_{d+j} \left( I \left( \bar{\mathcal{C}} \right) \right)$, for $j=0,\ldots,n-d$. By Murai \cite[Proposition 3.8]{Mu}, the possible sequences $l_0,l_1,\ldots, l_{n-d}$ are exactly those given in (b).  This fact together with (\ref{mili}) yields the desired conclusion.
\end{proof}

\begin{rem}
Since all $m_{n-j}\geq 0$, it follows from (\ref{limi}) that $-\sum_{i=0}^j\alpha_i \geq-\sum_{i=0}^j\alpha_i -\sum_{i=j+1}^{n-d+1}\lambda_i\geq 0$, for all $j=0,\ldots,n-d$. On the other hand, it follows from the definition of the $\alpha_j$ that they are alternating sums of binomial expressions. Therefore, from a numerical point of view it is not obvious why all the partial sums $-\sum_{i=0}^j\alpha_i $ are non-negative. Here we will give direct argument for this fact: for simplicity we denote for $j=0,\ldots,n-d+1$ by $\sigma_j$ the partial sum  $-\sum_{i=0}^j\alpha_i.$ Then
\begin{eqnarray*}
\sum_{j=0}^{n-d+1}\sigma_js^j&+& \sum_{j> n-d+1}\sigma_{n-d+1}s^j =(1+s+s^2+\cdots)(-\sum_{j=0}^{n-d+1}\alpha_js^j)\\
&=&(-\sum_{j=0}^{n-d+1}\alpha_js^j)/(1-s)=(\sum_{j=0}^{n-d+1}\alpha_js^j)/(s-1)= \sum_{j=0}^{n-d}{n\choose d+j}(s-1)^j.
\end{eqnarray*}
This shows that $\sigma_{n-d+1}=0$, and it remains to be shown that  the polynomial $$p_{n,d}=\sum_{j=0}^{n-d}{n\choose d+j}(s-1)^j,\quad \text{with} \quad n\geq d\geq 0,$$ expanded with respect to the powers  of $s$, has non-negative coefficients.

If $d=0$, then $p_{n,0}=\sum_{j=0}^{n}{n\choose j}(s-1)^j = \left((s-1)+1 \right)^n=s^n$, and the result follows in this case. Now we assume that $n\geq d>0$. Since $p_{n+1,d}=p_{n,d}+p_{n,d-1}$, induction on $n+d$ completes the proof.

\medskip
We also see that $p_{n,d}$ is a polynomial of degree $n-d$ with leading coefficient 1. Hence,  $-\sum_{i=0}^{n-d}\alpha_i=1$, and consequently, $\lambda_{n-d+1}= 1- m_d \left( I \left( \bar{\mathcal{C}} \right) \right)$. Therefore, $\lambda_{n-d+1} = 0$ if and only if $\mathcal{C} \neq \mathcal{C}_{n,d}$.
\end{rem}

The integers  $\alpha_i$ are functions of $n$ and $d$. To express this fact we will write in the following statement $\alpha_i(n,d)$ for $\alpha_i$.

 \begin{cor}
 \label{bound}
 Let $\mathfrak{C}_d \left( \left[ n \right] \right)$ be the set of all $d$-uniform chordal clutters on the vertex set $[n]$ and let $i\in [n-d]$. Then the following hold:
 \begin{enumerate}
 \item[(a)] $\max\{\lambda_{i}(\MC)\colon \; \MC\in \mathfrak{C}_d \left( \left[ n \right] \right), \; \mathcal{C} \neq \mathcal{C}_{n,d} \}= \alpha_{i}(n,d)+{n-1-i\choose d-1}$;
 \item[(b)] if $\lambda_1,\ldots, \lambda_{n-d}$ is given by the sequence $l_0,\ldots, l_{n-d}$ as in Theorem~\ref{possiblelambda}, then   $\lambda_{i}= \alpha_{i}(n,d)+{n-1-i \choose d-1}$, if and only if   $l_{n-d-j}= {n-1-j\choose d-1}$ for all  $j\geq i$ and  $l_{n-d-j}= 0$ for all $j<i$.
\end{enumerate}
 \end{cor}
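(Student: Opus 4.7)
The plan is to reduce the corollary to a statement about $M$-sequences via the formula obtained in the proof of Theorem~\ref{possiblelambda}. After relabelling $k:=n-d-i$, equation~(\ref{mili}) reads
\[
\lambda_i(\MC) = \alpha_i(n,d) + l_k - l_{k+1},\qquad i=1,\dots,n-d,
\]
where $(l_0,l_1,\dots,l_{n-d})$ is the $M$-sequence associated with $\MC$ (with the convention $l_{n-d+1}=0$), $l_0=1$, and $l_1\leq d$. By Theorem~\ref{possiblelambda} every such $M$-sequence is realized by some chordal clutter $\MC\neq\MC_{n,d}$. Thus both (a) and (b) reduce to computing $\max(l_k-l_{k+1})$, and characterizing the maximizers, as $(l_0,l_1,\dots,l_{n-d})$ ranges over $M$-sequences with $l_0=1$ and $l_1\leq d$.

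The key intermediate lemma I would isolate is: for any such $M$-sequence,
\[
l_k\leq\binom{d+k-1}{k}\qquad\text{for all }k\geq 0,
\]
with equality if and only if $l_j=\binom{d+j-1}{j}$ for every $0\leq j\leq k$. The upper bound is proved by induction on $k$ using the Macaulay condition $l_{k+1}\leq l_k^{\langle k\rangle}$ together with the weak monotonicity of $a\mapsto a^{\langle k\rangle}$ and the identity $\binom{d+k-1}{k}^{\langle k\rangle}=\binom{d+k}{k+1}$. For the equality clause I would argue the contrapositive: if $l_j<\binom{d+j-1}{j}$ for some $j<k$, then by the \emph{strict} monotonicity of the Macaulay operator one has $l_{j+1}\leq l_j^{\langle j\rangle}<\binom{d+j}{j+1}$, and iterating yields $l_k<\binom{d+k-1}{k}$. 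This strict monotonicity is the only nonroutine ingredient; it is a standard property of Macaulay representations, provable by a short inspection.

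From the lemma, part (a) is immediate: since $l_{k+1}\geq 0$,
\[
l_k-l_{k+1}\leq\binom{d+k-1}{k}=\binom{n-1-i}{d-1},
\]
and the bound is attained by the $M$-sequence with $l_j=\binom{d+j-1}{j}$ for $j\leq k$ and $l_j=0$ for $j>k$, which is directly checked to be an $M$-sequence with $l_1=d$ (it is the Hilbert function of $K[x_1,\dots,x_d]/(x_1,\dots,x_d)^{k+1}$). For part (b), equality $l_k-l_{k+1}=\binom{d+k-1}{k}$ forces simultaneously $l_k=\binom{d+k-1}{k}$ and $l_{k+1}=0$; the first, by the lemma, forces $l_j=\binom{d+j-1}{j}$ for every $j\leq k$, and the second, via the $M$-sequence condition together with $0^{\langle j\rangle}=0$, forces $l_j=0$ for every $j>k$. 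Translating back through $k=n-d-i$ and reindexing yields the description in (b) exactly.
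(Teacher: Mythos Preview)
Your proof is correct and follows essentially the same strategy as the paper: reduce via Theorem~\ref{possiblelambda} to maximizing $l_k-l_{k+1}$ over the admissible $M$-sequences, bound $l_k$ above by $\binom{d+k-1}{k}$, and identify the unique maximizing sequence. The only difference is in how the bound $l_k\le\binom{d+k-1}{k}$ and its equality case are justified. The paper invokes Macaulay's theorem to interpret $(l_j)$ as the Hilbert function of a standard graded quotient $R=K[x_1,\dots,x_d]/I$, from which $l_j\le\dim_K K[x_1,\dots,x_d]_j=\binom{d+j-1}{d-1}$ is immediate, and equality in degree $k$ forces $I_j=0$ for all $j\le k$ while $l_{k+1}=0$ forces $R_j=0$ for $j>k$; this pins down $R=K[x_1,\dots,x_d]/(x_1,\dots,x_d)^{k+1}$. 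You instead argue purely combinatorially through the Macaulay operator, using (strict) monotonicity of $a\mapsto a^{\langle k\rangle}$ and the identity $\binom{d+k-1}{k}^{\langle k\rangle}=\binom{d+k}{k+1}$. Your route is a bit more self-contained (it uses only the definition of $M$-sequence, not the full Hilbert-function correspondence), while the paper's Hilbert-function viewpoint makes the equality analysis essentially trivial. One small remark: your convention $l_{n-d+1}=0$ is never actually invoked, since for $i\in[n-d]$ the index $k+1=n-d-i+1$ stays in $\{1,\dots,n-d\}$.
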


 \begin{proof}
 By Macaulay's theorem on Hilbert functions (see for example \cite[Theorem~4.2.10]{BH} and \cite[Lemma~3.1]{Mu}), the  $M$-sequences as in Theorem~\ref{possiblelambda}(b) are precisely the Hilbert functions of rings of the form $R=K[x_1,\ldots,x_d]/I$, where $I$ is a graded ideal with
$$(x_1,x_2,\ldots,x_d)^{n-d+1}\subset I.$$
Since $\dim_KR_i\leq {d+i-1\choose d-1}$ for all $i$, we see that for $M$-sequence $l_0,l_1,\ldots,l_{n-d}$ with $l_1\leq d$ we have that $l_{n-d-i}\leq {n-1-i\choose d-1}$ for $i=1,\ldots, n-d$. Thus Theorem~\ref{possiblelambda} implies  that
$$\max\{\lambda_{i}(\MC)\colon \; \MC\in \mathfrak{C}_d \left( \left[ n \right] \right), \; \mathcal{C} \neq \mathcal{C}_{n,d} \}\leq  \alpha_{i}(n,d)+{n-1-i\choose d-1}.$$

Note that $\lambda_i(\MC)= \alpha_{i}(n,d)+{n-1-i\choose d-1}$, if and only if $l_{n-d-i}={n-1-i\choose d-1}$ and $l_{n-d-i+1}=0$. This in turn is the case if and only if $l_0+l_1t+\cdots +l_{n-d}t^{n-d}$ is the Hilbert series of the ring $R=K[x_1,\ldots,x_d]/(x_1,\ldots,x_d)^{n-d+1}$. This, indeed completes the proof of both (a) and  (b).
 \end{proof}


Corollary~\ref{bound} together with Theorem~\ref{possiblelambda} immediately implies

\begin{cor}
\label{maximalvalue}
Let $i\in [n-d]$, and suppose that  $\lambda_i(\MC)$ is  maximal among all $\lambda_i(\MC')$ with  $\MC_{n,d} \neq \MC'\in \mathfrak{C}_d \left( \left[ n \right] \right)$. Then
\begin{equation*}
\lambda_j(\MC)	=
\begin{cases}
\alpha_j(n,d) - {n-1-j \choose d-2}, &  \text{if } j> i, \\ 
\alpha_j(n,d)+{n-1-j\choose d-1}, &\text{if } j=i,\\
\alpha_j(n,d),  & \text{if } j<i.
\end{cases}
\end{equation*}
\end{cor}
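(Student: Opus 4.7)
The plan is to combine Corollary~\ref{bound} with the defining relations of Theorem~\ref{possiblelambda}. Reindexing the relation from Theorem~\ref{possiblelambda} via $j = n-d-i$ puts it in the form
\begin{equation*}
\lambda_j(\MC) = \alpha_j(n,d) + l_{n-d-j} - l_{n-d-j+1}, \qquad j=1,\ldots,n-d,
\end{equation*}
so once the associated $M$-sequence $(l_0,\ldots,l_{n-d})$ is known, every $\lambda_j(\MC)$ can be read off.

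First I would invoke Corollary~\ref{bound}: by part (a) the maximum value of $\lambda_i$ is $\alpha_i(n,d) + {n-1-i\choose d-1}$, and by part (b) its attainment forces a uniquely determined $M$-sequence, namely $l_{n-d-k} = {n-1-k\choose d-1}$ for $k \geq i$ and $l_{n-d-k} = 0$ for $k < i$ (equivalently, the Hilbert function of $K[x_1,\ldots,x_d]/(x_1,\ldots,x_d)^{n-d-i+1}$). Hence the entire $\lambda$-sequence of $\MC$ is pinned down, and it only remains to compute each $\lambda_j(\MC)$ through the displayed formula.

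The remaining step is a direct case split on $j$ against $i$. For $j < i$, both indices $n-d-j$ and $n-d-j+1$ fall in the vanishing range of the $M$-sequence, giving $\lambda_j(\MC) = \alpha_j(n,d)$. For $j = i$ one has $l_{n-d-i} = {n-1-i\choose d-1}$ while $l_{n-d-i+1} = 0$, producing $\lambda_i(\MC) = \alpha_i(n,d) + {n-1-i\choose d-1}$. For $j > i$, both indices lie in the nonvanishing range, so the difference becomes ${n-1-j\choose d-1} - {n-j\choose d-1}$, which collapses by Pascal's identity ${n-j\choose d-1} = {n-1-j\choose d-1} + {n-1-j\choose d-2}$ to $-{n-1-j\choose d-2}$, yielding $\lambda_j(\MC) = \alpha_j(n,d) - {n-1-j\choose d-2}$.

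There is no genuine obstacle here; the argument is direct substitution once the maximizing $M$-sequence has been identified. The only mildly delicate point is the transition at $j = i$, where Pascal's cancellation is unavailable because $l_{n-d-i+1}$ has already dropped to $0$ — this asymmetry is precisely what produces the extra summand ${n-1-i\choose d-1}$ that distinguishes the middle case from the other two.
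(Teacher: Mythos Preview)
Your proof is correct and follows exactly the route the paper intends: the paper simply states that Corollary~\ref{maximalvalue} is an immediate consequence of Corollary~\ref{bound} together with Theorem~\ref{possiblelambda}, and you have carried out precisely that substitution, including the Pascal identity needed for the case $j>i$.
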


\begin{rem}
Among the clutters $ \MC \in   \mathfrak{C}_d \left( \left[ n \right] \right)$, for which $\MC \neq \MC_{n,d}$ and  $\lambda_i(\MC)$ is maximal, there is a unique clutter for which  $I(\bar{\MC})$ is squarefree strongly stable. Indeed, let $\MC$ be such a clutter. It follows from the proof of Corollary~\ref{bound} that $m_{n-i}(I(\bar{\MC}))$ takes the maximal value and $m_{n-i+1}(I(\bar{\MC}))=\cdots = m_{n}(I(\bar{\MC}))=0$. The only  clutter $\MC\in   \mathfrak{C}_d \left( \left[ n \right] \right)$ having this property and  for which $I(\bar{\MC})$ is strongly stable is the clutter  $$\MC=\{F\colon \; F\subset [n], |F|=d, F\not\subset [n-i]\}.$$
\end{rem}

\medskip
The above results give no information about the $\lambda$-sequence of a complete clutter. In that case however we have the following very explicit result.

\begin{prop}
	\label{complete}
For  the complete $d$-uniform clutter on the vertex set $[n]$, we have
	\begin{equation*}
		\lambda_i(\MC_{n,d})=
		\begin{cases}
			{n-1-i \choose d-2},  &  \text{for } i=1,\ldots, n-d+1, \\
			0, &\text{otherwise}.
		\end{cases}
	\end{equation*}
\end{prop}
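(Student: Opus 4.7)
The plan is to exploit Proposition~\ref{fvector}, which expresses the $\textbf{f}$-polynomial of the clique complex of a chordal clutter in terms of its simplicial multiset, and hence in terms of its $\lambda$-sequence. The key observation is that the clique complex of $\MC_{n,d}$ is trivial, yielding a clean equation that the $\lambda_i(\MC_{n,d})$ must satisfy, which I then solve explicitly.

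First I would verify that $\MC_{n,d}$ is chordal and that its clique complex $\Delta = \Delta(\MC_{n,d})$ is the full $(n-1)$-simplex on $[n]$: every subset of $[n]$ is a clique of $\MC_{n,d}$ (the defining condition is either vacuous, if the subset has fewer than $d$ elements, or trivially satisfied), and any $(d-1)$-subset $e \subset [n]$ is simplicial since $\mathrm{N}_{\MC_{n,d}}[e] = [n]$ is a clique; chordality then follows by induction on the number of circuits, and $\textbf{f}_\Delta(t) = (1+t)^n$.

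Next, I substitute $\textbf{f}_\Delta(t) = (1+t)^n$ into equation~(\ref{f-vector, No1}) and compare coefficients of $t^i$ to obtain $M_i = \binom{n}{i+d-1}$ for $1 \leq i \leq n-d+1$ and $M_i = 0$ otherwise, where $M_i = \sum_{k=1}^{r}\binom{N_k}{i} = \sum_{j \geq 1}\binom{j}{i}\lambda_j(\MC_{n,d})$. This yields the triangular linear system
\[
\sum_{j \geq i}\binom{j}{i}\lambda_j(\MC_{n,d}) = \binom{n}{i+d-1} \qquad (i=1,\ldots,n-d+1),
\]
together with $\lambda_j(\MC_{n,d}) = 0$ for $j > n-d+1$, which uniquely determines the $\lambda$-sequence.

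It will then remain to verify that the announced values $\lambda_j = \binom{n-1-j}{d-2}$ solve this system; equivalently,
\[
\sum_{j=i}^{n-d+1}\binom{j}{i}\binom{n-1-j}{d-2} = \binom{n}{i+d-1}.
\]
This is a standard identity, proved by classifying each $(i+d-1)$-subset $S$ of $[n]$ according to the value $j+1$ of its $(i+1)$-st smallest element: one then has $\binom{j}{i}$ choices for the $i$ elements of $S$ in $[j]$ and $\binom{n-1-j}{d-2}$ choices for the $d-2$ elements of $S$ in $[n] \setminus [j+1]$. I do not foresee any serious obstacle; the only subtle step is the reduction of the simplicial-multiset formula to this triangular system, after which the binomial identity is routine.
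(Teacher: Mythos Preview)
Your approach is correct and genuinely different from the paper's. The paper proceeds constructively: it lists, in lexicographic order, all $(d-1)$-subsets of $[n-1]$ containing $1$, verifies step by step that each is simplicial in the successive deletion, computes each open neighborhood explicitly as $\{i_{d-2}+1,\ldots,n\}$, and then finishes the count by induction on $n$ (the last deletion reducing $\MC_{n,d}$ to $\MC_{n-1,d}$). You bypass all of this by feeding $\textbf{f}_{\Delta(\MC_{n,d})}(t)=(1+t)^n$ into Proposition~\ref{fvector}, extracting the upper-triangular system $\sum_{j\geq i}\binom{j}{i}\lambda_j=\binom{n}{i+d-1}$, and recognizing its unique solution via the Vandermonde-type identity $\sum_j\binom{j}{i}\binom{n-1-j}{d-2}=\binom{n}{i+d-1}$. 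This is more conceptual and considerably shorter; the paper's route, on the other hand, yields an explicit simplicial order as a byproduct, which is what is used later for Corollary~\ref{co-chordal}.

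One point to tighten: your justification that $\MC_{n,d}$ is chordal (``chordality then follows by induction on the number of circuits'') is incomplete as stated. After deleting a single simplicial $e$, the resulting clutter $\MC_{n,d}\setminus e$ is no longer a complete clutter, so the observation that every $(d-1)$-subset is simplicial no longer applies and there is no inductive hypothesis to invoke. Since your whole argument rests on Proposition~\ref{fvector}, you do need chordality up front. The quickest patch inside the paper's framework is to note that $I(\bar{\MC_{n,d}})=0$ is vacuously squarefree strongly stable, so chordality follows from \cite[Theorem~2.4]{XY} as cited in the proof of Theorem~\ref{all}; alternatively, you can simply borrow the explicit simplicial order constructed in the paper's own proof. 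Once chordality is in hand, your argument goes through without difficulty.
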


\begin{proof}
Let $T=\{e\subset [n-1]\colon \quad |e|=d-1,\; 1 \in e\}$. We define a total order $\prec$ on $T$  as follows: let $e=\{1, i_1, i_2, \ldots, i_{d-2}\}$ and $e'=\{1, j_1, j_2, \ldots, j_{d-2}\}$ where $1<i_1< i_2< \cdots< i_{d-2}$ and $1< j_1< j_2< \cdots< j_{d-2}$. Then $e \prec e'$ if and only if there exists an integer $t$ such that $i_1=j_1, \ldots, i_{t-1}=j_{t-1}$ and $i_t<j_t$.

We write $T=\{e_1, \ldots, e_m\}$ with $e_1\prec e_2\prec \cdots\prec e_m$, where $m={n-2 \choose d-2}$ and set $\mathcal{C}_0:=\mathcal{C}_{n,d}$ and $\mathcal{C}_i := \mathcal{C}_{i-1} \setminus e_i$, for all $i \geq 1$.
We show that $e_{i}\in \mathrm{Simp} \left( \mathcal{C}_{i-1} \right)$, for all $1 \leq i \leq m$.

Let $i=1$. Then $e_1=\{1,2, \ldots, d-1\}$, and since $\mathcal{C}_{n,d}$ is $d$-complete, we have $\mathrm{N}_\C \left[ e_1 \right]=[n]$ which is a clique of $\mathcal{C}_{n,d}= \C_0$ and so $e_{1} \in \mathrm{Simp}(\mathcal{C}_{0})$. Suppose that $i>1$ and that $e_i=\{1, i_1, \ldots, i_{d-2}\}$.

First we show  that $\mathrm{N}_{\mathcal{C}_{i-1}} \left[ e_i \right] = e_i \cup \{i_{d-2}+1, \ldots, n\}$.
Let $j\in \{i_{d-2}+1, \ldots, n\}$. To prove that $j \in \mathrm{N}_{\mathcal{C}_{i-1}} \left[ e_i \right]$, we show that $e_i \cup \{ j \} \in \mathcal{C}_{i-1}$. Since $e_i \cup \{ j \} \in \mathcal{C}_{0}$, it is enough to prove that $e_k \nsubseteq e_i \cup \{ j \}$, for all $1\leq k\leq i-1$.

Suppose that there exists $1 \leq k \leq m$, $k\neq i$, such that $e_k \subset e_i\cup \{ j \}$. So $e_k= \{ j \} \cup e_i \setminus \{ i_s \}$, for some $1 \leq s \leq d-2$. Since $j>i_l$ for all $1\leq l\leq d-2$, we have $e_k > e_i$. So $k>i$ which implies that $e_k \nsubseteq e_i \cup \{ j \}$, for all $1 \leq k \leq i-1$.
Thus $\{ i_{d-2}+1, \ldots, n\} \subset \mathrm{N}_{\mathcal{C}_{i-1}} \left[ e_i \right]$.

Conversely,  suppose that $j \in \mathrm{N}_{\mathcal{C}_{i-1}} \left[ e_i \right] \setminus e_i$. Then $e_i \cup \{ j \} \in \mathcal{C}_{i-1}$.
Thus $e_k \nsubseteq e_i\cup \{ j \}$, for all $1\leq k\leq i-1$. In particular $e_k \neq  \{ j \} \cup e_i \setminus \{ i_l \}$, for any $1\leq l\leq d-2$. If $j=n$, then $j \in \{ i_{d-2}+1, \ldots, n \}$. Suppose that $j \neq n$. Then $\{ j \} \cup e_i \setminus \{ i_l \}\in T$. Then there exists $i+1 \leq k_l \leq m$ such that $e_{k_l} = \{ j \} \cup e_i \setminus \{i_l\}$, for all $1 \leq l \leq d-2$.  By the way of ordering of elements of $T$, $k_{d-2} > i$ implies that  $e_{k_{d-2}} > e_i$ and so $j > i_{d-2}$. So  $\mathrm{N}_{\mathcal{C}_{i-1}} \left[ e_i \right] \setminus e_i \subseteq \{ i_{d-2}+1, \ldots, n \}$.
Hence $\mathrm{N}_{\mathcal{C}_{i-1}} \left[ e_i \right] = \{i_{d-2}+1, \ldots, n\}\cup  e_i$.

Now we prove that $\mathrm{N}_{\mathcal{C}_{i-1}} \left[ e_i \right]$ is a clique in $\mathcal{C}_{i-1}$.
Let $F \subseteq \mathrm{N}_{\mathcal{C}_{i-1}} \left[ e_i \right]$ with $|F|=d$. Assume that $e_k \subset F$ and $e_k \in T$. Then either $k=i$, or there exists at least one $j \in  \{ i_{d-2}+1, \ldots, n-1 \}$ such that $j \in e_k$. Since $j>i_l$ for all $i_l\in e_i$, it follows that $e_k > e_i$ and so $k > i$. Therefore $e_k \not\subset F$, for all $1 \leq k \leq i-1$. It follows that $F \in \mathcal{C}_{i-1}$.

One may easily check that $\mathcal{C}_{r-1} \setminus e_r =\mathcal{C}_{n-1,d}$. The induction on $[n]$, now shows that $\mathcal{C}_{n,d}$ is a chordal clutter.

Let $\textbf{e} = e_1, \ldots, e_r$ be the simplicial sequence as above, $\mathcal{N}' = \{N'_1, \ldots, N'_s\}$ be the multiset of $\C':= \C_{n-1,d}$ and $\lambda'_j =|\{N'_i \colon \quad N'_i =j\}|$. One may verify that $\lambda_{n-d+1} = 1$ (because $e_1$ is the only simplicial submaximal circuit with $\lvert \mathrm{N}_\C \left( e_1 \right) \rvert = n-d+1$) and
\[
\lambda_j = \lambda'_j + \left| \left\{ i \colon \; \lvert \mathrm{N}_{\C_{i-1}} \left(e_i \right) \rvert =j \right\} \right|,
\]
for $1 \leq j \leq n-d$. We may use induction (on the number of vertices of $\C$) to obtain, $\lambda'_j =  {{n-1-j-1} \choose d-2}$. Note that $\lvert \left\{ i \colon \; \lvert \mathrm{N}_{\C_{i-1}} \left(e_i \right) \rvert =j \right\} \rvert = {n-j-2 \choose d-3}$. Therefore,
\[
\lambda_j = {n-1-j-1 \choose d-2} + {n-j-2 \choose d-3} = {n-j-1 \choose d-2}.
\]		
\end{proof}
One would expect that if $\MC$ is a $d$-uniform chordal clutter on the vertex set $[n]$, then $\lambda_i(\MC)\leq \lambda_i(\MC_{n,d})$, where  $\lambda_i(\MC_{n,d})= {n-1-i \choose d-2}$, by Proposition~\ref{complete}. However this is not the case, as the following example shows: let $\MC=\{123,124,134\}$. Then $\MC$ is chordal with $\lambda(\MC)=3,0,\ldots$. However, $\lambda(\MC_{4,3})=2,1,0,\ldots$.

Let $\MC$ be a $d$-uniform clutter on the vertex set $[n]$. We call $\MC$ {\em co-chordal}, if there exists a simplicial sequence  $e_1,\ldots,e_r$ in $\mathcal{C}_{n,d}$ such that
\[
\MC=(\MC_{n,d})_{e_1e_2\ldots e_r}.
\]
At present we do not know whether any co-chordal clutter is chordal. The previous example shows a chordal clutter need not to be co-chordal.

\begin{cor}
	\label{co-chordal}
	Let $\MC$ be a $d$-uniform clutter on the vertex set $[n]$ which is both chordal and co-chordal. Then for all $i$,
	\[
	\lambda_i(\MC)\leq {n-1-i \choose d-2}.
	\]
\end{cor}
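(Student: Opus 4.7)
The plan is to exploit the uniqueness of the simplicial multiset (Corollary~\ref{remarkable}) by concatenating the two simplicial sequences witnessing chordality and co-chordality of $\MC$ to obtain a single simplicial order of $\MC_{n,d}$. Since Proposition~\ref{complete} gives us $\lambda_i(\MC_{n,d})=\binom{n-1-i}{d-2}$, the inequality will follow once we show that the multiset of $\MC$ is contained in the multiset of $\MC_{n,d}$.

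First I would unpack the hypotheses. Co-chordality of $\MC$ gives a simplicial sequence $f_1,\ldots,f_s$ in $\MC_{n,d}$ with $(\MC_{n,d})_{f_1\ldots f_s}=\MC$. Chordality of $\MC$ gives a simplicial order $e_1,\ldots,e_r$ in $\MC$ with $\MC_{e_1\ldots e_r}=\emptyset$. I would then verify, essentially by unwinding definitions, that the concatenation
\[
f_1,\ldots,f_s,e_1,\ldots,e_r
\]
is a simplicial order of $\MC_{n,d}$: each $f_j$ is simplicial in the appropriate deletion of $\MC_{n,d}$ by the co-chordality assumption; for $e_j$ simpliciality in $((\MC_{n,d})_{f_1\ldots f_s})_{e_1\ldots e_{j-1}}=\MC_{e_1\ldots e_{j-1}}$ is exactly the hypothesis that $e_1,\ldots,e_r$ is a simplicial order of $\MC$; and the final deletion empties the clutter.

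By Corollary~\ref{remarkable} (uniqueness of the simplicial multiset), the multiset of $\MC_{n,d}$ equals the multiset associated with this concatenated simplicial order. That multiset is, by construction, the disjoint union of the multiset of $\MC$ (contributed by the tail $e_1,\ldots,e_r$) with the multiset of neighborhood sizes contributed by $f_1,\ldots,f_s$. Comparing $\lambda_i$'s term by term,
\[
\lambda_i(\MC_{n,d})=\lambda_i(\MC)+\bigl|\{j\colon |\mathrm{N}_{(\MC_{n,d})_{f_1\ldots f_{j-1}}}(f_j)|=i\}\bigr|\geq \lambda_i(\MC).
\]
Combining this with Proposition~\ref{complete} yields $\lambda_i(\MC)\leq\binom{n-1-i}{d-2}$, completing the proof.

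The argument is essentially a bookkeeping exercise, and I do not foresee a serious obstacle; the one point that requires genuine (if brief) care is verifying that the concatenated sequence remains a simplicial order, i.e.\ that simpliciality of $e_j$ with respect to $\MC_{e_1\ldots e_{j-1}}$ transfers correctly after passing through the preliminary deletions of the $f_k$'s. Once the concatenation is in place, the result is an immediate application of Corollary~\ref{remarkable} together with Proposition~\ref{complete}.
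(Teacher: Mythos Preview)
Your proposal is correct and follows essentially the same approach as the paper: concatenate the co-chordality simplicial sequence with the chordality simplicial order to obtain a simplicial order of $\MC_{n,d}$, deduce that the multiset of $\MC$ is a sub-multiset of that of $\MC_{n,d}$, and invoke Proposition~\ref{complete}. Your write-up is in fact more explicit than the paper's on the verification step.
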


\begin{proof}
	There exist simplicial sequences $e_1,\ldots,e_r$ in $\mathcal{C}_{n,d}$ and $e_{r+1},\ldots,e_s$ in $\mathcal{C}$ such that $\MC=(\MC_{n,d})_{e_1e_2\cdots e_r}$ and $\MC_{e_{r+1}\cdots e_s}=\emptyset$.
Thus $e_1,e_2,\ldots,e_s$ is a simplicial order for $\MC_{n,d}$. It follows that the multiset of $\MC$ is a sub-multiset of that of $\MC_{n,d}$. This yields the desired conclusion.
\end{proof}

\end{document}